\pgfplotsset{width=10cm,compat=1.9}
\newcommand\numberthis{\addtocounter{equation}{1}\tag{\theequation}}
\numberwithin{equation}{section}
\newtheorem{thm}{Theorem}[section]
\newtheorem{cor}[thm]{Corollary}
\newtheorem{prop}[thm]{Proposition}
\newtheorem{conj}[thm]{Conjecture}
\newtheorem{ques}[thm]{Question}
\renewcommand{\epsilon}{\varepsilon}
\newcommand{\eps}{\varepsilon}
\newcommand{\rd}{\mathbb{R}^d}
\renewcommand{\geq}{\geqslant}
\renewcommand{\leq}{\leqslant}
\newcommand{\ubd}{\overline{\dim}_{\textup{B}}}
\newcommand{\ubdp}{\overline{\dim}^s_{\textup{B}}}
\newcommand{\lbd}{\underline{\dim}_{\textup{B}}}
\newcommand{\hd}{\dim_{\textup{H}}}
\newcommand{\pd}{\dim_{\textup{P}}}
\newcommand{\fs}{\dim^\theta_{\mathrm{F}}}
\newcommand{\efs}{\textup{E} \hspace{-0.5mm}\dim^\theta_{\mathrm{F}}}
\newcommand{\efd}{\textup{E} \hspace{-0.5mm}\dim_{\mathrm{F}}}
\newcommand{\fd}{\dim_{\mathrm{F}}}
\newcommand{\sd}{\dim_{\mathrm{S}}}
\def\@setauthors{%
  \begingroup
  \def\thanks{\protect\thanks@warning}%
  \trivlist
  \centering\footnotesize \@topsep30\p@\relax
  \advance\@topsep by -\baselineskip
  \item\relax
  \author@andify\authors
  \def\\{\protect\linebreak}

  \normalsize\lowercase{\authors}%
  
	\ifx\@empty\contribs
  \else
    ,\penalty-3 \space \@setcontribs
    \@closetoccontribs
  \fi
  \endtrivlist
  \endgroup
}
\def\@settitle{\begin{center}
\LARGE\lowercase{\@title}
  \end{center}%
}
\definecolor{lightblue}{HTML}{2B77A4}
\definecolor{darkred}{HTML}{9E0D0D}
\title{Fourier decay of  product measures}
\author{Jonathan M. Fraser\\ \\
 University of St Andrews, Scotland\\
Email: jmf32@st-andrews.ac.uk}
\thanks{The  author was  financially supported by a  \emph{Leverhulme Trust Research Project Grant} (RPG-2019-034) and an \emph{EPSRC Standard Grant} (EP/Y029550/1).}
\begin{document}


\maketitle
\thispagestyle{empty}

\begin{abstract}
Can one characterise the Fourier decay of a product measure in terms of the Fourier decay of its marginals?  We make inroads on this question by describing  the Fourier spectrum of a product measure in terms of the Fourier spectrum of its marginals. The  Fourier  spectrum  is a continuously parametrised family  of dimensions living between the Fourier  and Hausdorff dimensions and captures more Fourier analytic information than either dimension considered in isolation.  We provide  several examples and applications, including to Kakeya and Furstenberg sets. In the process we derive   novel Fourier analytic characterisations of the upper and lower box dimensions.
\\ \\ 
\emph{Mathematics Subject Classification}: primary: 42B10, 28A80; secondary: 28A75, 28A78.
\\
\emph{Key words and phrases}:  Fourier spectrum, Fourier dimension, Fourier transform, product measure, surface measure, Kakeya set, Furstenberg set, box dimension, packing dimension.
\end{abstract}

\tableofcontents

\section{Introduction}

Given integers $1 \leq k < d$ and  finite Borel measures $\mu$  on $\mathbb{R}^k$ and  $\nu$   on $\mathbb{R}^{d-k}$, we are interested in the product measure $\mu \times \nu$  on $\rd$ and how it can be described in terms of its \emph{marginals}  $\mu$ and $\nu$.  A basic question from fractal geometry asks how the Hausdorff dimension of $\mu \times \nu$ depends on the Hausdorff dimensions of the marginals.  However, even this basic question is subtle.  One might naively expect the Hausdorff dimension of the product to be the sum of the Hausdorff dimensions of the marginals, but in general this is false and one cannot do better than
\begin{equation} \label{hdbe}
\hd \mu + \hd \nu \leq \hd \mu \times \nu \leq \hd \mu + \pd \nu
\end{equation} 
where $\hd$ dimension denotes Hausdorff dimension and $\pd$ denotes \emph{packing} dimension.  We shall revisit this phenomenon later, but for this paper the main question is about the Fourier analytic behaviour  of $\mu \times \nu$ and we are especially interested in Fourier decay.  Fourier decay is coarsely described  by the Fourier dimension, denoted by $\fd$, and it is easy to see that 
\begin{equation} \label{fdbe}
\fd (\mu \times \nu) = \min\{ \fd \mu , \, \fd \nu\}.
\end{equation} 
That is, the product cannot decay any better than its marginals.  However, this does not tell the full story.  In order to better understand the Fourier decay of a product measure, we consider the Fourier spectrum, recently introduced in \cite{fourierspec}, which is a  continuously parametrised family  of dimensions which interpolates between the Fourier  and Hausdorff/Sobolev  dimensions.  Even before we begin, it is clear that something non-trivial will happen, because the interpolation will witness the `Fourier behaviour' \eqref{fdbe} transforming into the `Hausdorff behaviour' \eqref{hdbe} as we vary the parameter; see our main results Theorems \ref{product} and \ref{product2}.  

As a by-product of our investigation, we are lead to  novel Fourier analytic characterisations of the upper and lower box dimensions of fractal sets, see Proposition \ref{boxxx}, which may have independent interest.  We give some simple applications in Section \ref{applications} and consider several simple examples in Section \ref{examples}, including surface measures on cylinders and spheres.  Then, in Section \ref{kakeyasection}, we use our results to construct Kakeya and Furstenberg sets with very bad Fourier analytic behaviour, for example, demonstrating that a result of Oberlin \cite{oberlin} does not generalise to higher dimensions.  

\subsection{Notation and convention}

Throughout the  paper, we write $A \lesssim B$ to mean there exists a constant $c >0$ such that $A \leq cB$.  The implicit constants $c$ are suppressed to improve exposition.  If we wish to emphasise that these constants depend on another parameter $\lambda$, then we will write $A \lesssim_\lambda B$.  We also write $A \gtrsim B$ if $B \lesssim A$ and $A \approx B$ if $A \lesssim B$ and $A \gtrsim B$.

We  write $a \wedge b = \min\{a,b\}$ and $a \vee b = \max\{a,b\}$ for real numbers $a$ and $b$. 

Finally, when we refer to a finite Borel measure, we exclude the zero measure to avoid trivial situations.

\subsection{The Fourier dimension and spectrum}

Let $\mu$ be a finite Borel measure on $\mathbb{R}^d$ with support denoted by $\textup{spt}(\mu)$.  
For $s \geq 0$, the $s$-energy of $\mu$ is  given by 
\[
\mathcal{I}_s(\mu) = \int \int \frac{d \mu(x) \, d \mu(y)}{|x-y|^s} 
\]
and can be used to estimate the Hausdorff dimension of $\mu$ and its support (the so-called `potential theoretic method').  Indeed, if $s \geq 0$ is such that $\mathcal{I}_s(\mu) <\infty$, then
\[
\hd \textup{spt}(\mu) \geq \hd \mu \geq s
\]
where $\hd$ denotes Hausdorff dimension. In fact, this is a precise characterisation of Hausdorff dimension for sets since for all Borel sets $X$ and $s < \hd X$, there exists a finite Borel measure $\mu$ on $X$ such that $\mathcal{I}_s(\mu) <\infty$.   See \cite{falconer, mattila} for more on Hausdorff dimension, energy and the potential theoretic method.

There is a useful connection between energy (and thus Hausdorff dimension) and the Fourier transform.   The \emph{Fourier transform} of $\mu$ is the function $\widehat \mu : \rd \to \mathbb{C}$ given  by
\[
\widehat \mu (z) = \int e^{-2\pi i z \cdot x} \, d \mu(x).
\]
In fact, for $0<s<d$ 
\[
\mathcal{I}_s(\mu) \approx_{s,d} \int_{\mathbb{R}^d} |\widehat \mu(z)  |^{2} |z|^{s-d} \, dz,
\]
see \cite[Theorem 3.1]{mattila}. Therefore, if $|\widehat \mu(z)  |\lesssim |z|^{-s/2}$ for some $s \in (0,d)$, then $\mathcal{I}_s(\mu) <\infty$ and $\hd \mu \geq s$.  This motivates the \emph{Fourier dimension}, defined by
\[
\fd \mu = \sup\{ s \geq 0 : |\widehat \mu(z)  |\lesssim |z|^{-s/2}\}
\]
for measures and
\[
\fd X =   \sup \{ \min\{ \fd \mu,  d\} : \textup{spt}(\mu) \subseteq X  \}
\]
for  sets $X \subseteq \rd$. Here the supremum is taken over finite Borel measures $\mu$ supported by   $X$.   For non-empty sets   $X \subseteq \rd$ 
\[
0 \leq \fd X \leq \hd X \leq d
\]
and $X$ is called a \emph{Salem set} if   $\fd X = \hd X$.  See \cite{mattila} for more on the Fourier dimension and \cite{grafakos} for   Fourier analysis more generally. There are many random constructions giving rise to Salem sets but non-trivial deterministic examples are much harder to come by---indeed, many prominent  examples fail to be Salem.  For example, a line segment in $\mathbb{R}$ is Salem but in $\mathbb{R}^2$ it is not.  Further, the middle third Cantor set, non-trivial  self-affine Bedford--McMullen carpets, the cone in $\mathbb{R}^3$ \cite{harris}, and the graph of fractional Brownian motion all fail to be Salem sets.

 In \cite{fourierspec} we introduced a continuum of dimensions (the Fourier spectrum) which vary continuously in-between the Hausdorff and Fourier dimensions.  The motivation behind this idea is to attempt to capture more Fourier analytic and geometric information than is provided by the Fourier and Hausdorff dimensions alone.  The hope is then to obtain  stronger applications in various contexts.  We have already had some success here, with the Fourier spectrum providing new information about dimensions of sumsets and convolutions \cite{fourierspec}, new estimates in the distance set problem \cite{fourierspec}, and new estimates for the dimension of the exceptional set in Marstrand's projection theorem \cite{ana2}. 

Following \cite{fourierspec}, for $\theta \in [0,1]$ and $s \geq 0$, we define energies
\[
\mathcal{J}_{s,\theta}(\mu) = \left( \int_{\mathbb{R}^d} |\widehat \mu(z)  |^{2/\theta} |z|^{s/\theta-d} \, dz \right)^{\theta},
\]
where we adopt the convention that
\[
\mathcal{J}_{s,0}(\mu) = \sup_{z \in \mathbb{R}^d} |\widehat \mu(z)  |^{2} |z|^{s}.
\]
We define the \emph{Fourier spectrum}  of $\mu$ at $\theta$ by 
\[
\fs \mu = \sup\{s \geq 0 : \mathcal{J}_{s,\theta}(\mu) < \infty\}
\]
where we write $\sup \o = 0$.  Note that 
\[
\mathcal{J}_{s,1}(\mu) = \int_{\mathbb{R}^d} |\widehat \mu(z)  |^{2} |z|^{s-d} \, dz 
\]
is  the familiar \emph{Sobolev energy} and, therefore, $\dim^1_{\mathrm{F}} \mu = \sd \mu$ where $\sd \mu$ is the \emph{Sobolev dimension} of $\mu$, see \cite[Section 5.2]{mattila}.  Moreover,  $\dim^0_{\mathrm{F}} \mu = \dim_{\mathrm{F}} \mu$ is the Fourier dimension of $\mu$. 

For   sets  $X \subseteq \mathbb{R}^d$, we define  the \emph{Fourier spectrum}    of $X$ at $\theta$ by 
\[
\fs X =   \sup \{ \min\{\fs \mu ,  d\}  :  \textup{spt}(\mu) \subseteq X  \}.
\]
Here the supremum is again taken over finite Borel measures $\mu$ supported by  $X$.  Evidently,  $\dim^0_{\mathrm{F}} X = \dim_{\mathrm{F}} X$ is the Fourier dimension of $X$ and, since   $\mathcal{J}_{s,1}(\mu)  \approx  \mathcal{I}_s(\mu)$ for $0<s<d$,   
 $\dim^1_{\mathrm{F}} X = \dim_{\mathrm{H}} X$ is  the Hausdorff dimension for Borel sets $X$.  It is easy to see  that
\[
\dim_{\mathrm{F}} \mu  \leq \fs \mu \leq \dim_{\mathrm{S}} \mu 
\]
and
\[
\dim_{\mathrm{F}} X  \leq \fs X \leq \dim_{\mathrm{H}} X
\]
for all $\theta \in (0,1)$.  As a function of $\theta$, $\fs\mu$ is continuous for $\theta\in(0,1]$ by \cite[Theorem~1.1]{fourierspec} and, in addition,  continuous at $\theta=0$ provided $\mu$ is compactly supported; see  \cite[Theorem~1.3]{fourierspec}.  Moreover, $\fs X$ is continuous for   $\theta\in[0,1]$ by \cite[Theorem~1.5]{fourierspec}; thus the Fourier spectrum gives a   continuous interpolation between the Fourier and Hausdorff dimensions for all Borel sets.

Strichartz \cite{stric1, stric2} considered bounds for  averages of the  Fourier transform of the form
\[
R^{d-\beta_k} \lesssim \int_{|z| \leq R} |\widehat \mu (z)|^{2k} \, dz \lesssim R^{d-\alpha_k}
\]
for integers $k \geq 1$ and $0 \leq \alpha_k \leq \beta_k$. Motivated by this, for $\theta \in (0,1]$, let
\[
\overline{F}_\mu(\theta) = \limsup_{R \to \infty} \frac{\theta \log \left(R^{-d} \int_{|z| \leq R} |\widehat \mu (z)|^{2/\theta} \, dz \right)}{-\log R}
\]
and
\[
\underline{F}_\mu(\theta) = \liminf_{R \to \infty} \frac{\theta \log \left(R^{-d} \int_{|z| \leq R} |\widehat \mu (z)|^{2/\theta}\, dz \right)}{-\log R}.
\]
In particular, $\overline{F}_\mu(\theta)$ is the infimum of $\beta \geq 0$ for which
\[
R^{d-\beta/\theta} \lesssim \int_{|z| \leq R} |\widehat \mu (z)|^{2/\theta} \, dz
\]
and  $\underline{F}_\mu(\theta)$ is the supremum  of $\alpha\geq 0$ for which
\[
  \int_{|z| \leq R} |\widehat \mu (z)|^{2/\theta} \, dz\lesssim R^{d-\alpha/\theta}.
\]
It is easy to see that 
\[
0 \leq \underline{F}_\mu(\theta)  \leq \overline{F}_\mu(\theta)  \leq d\theta
\]
and it was proved in \cite[Theorems 4.1--4.2]{fourierspec} that
\[
\underline{F}_\mu(\theta) = \min\{ d \theta , \, \fs \mu\}.
\]
In particular, the Fourier spectrum contains strictly more information than Strichartz' averaged Fourier dimensions in the case where the Fourier dimension is strictly positive.

\section{Main results: dimensions of product measures}

Our first result gives bounds  for the Fourier spectrum of a product measure in terms of the Fourier spectrum of its marginals.

\begin{thm} \label{product}
Let $1 \leq k<d$ be integers,  and $\mu$ and $\nu$ be    finite Borel measures on $\mathbb{R}^k$ and  $\mathbb{R}^{d-k}$, respectively.  Then
\[
\fs  (\mu \times \nu) \geq \min\Big\{ k\theta +\fs \nu ,  \ \fs \mu  + (d-k)\theta,  \  \fs \mu + \fs \nu\Big\}
\]
and 
\[
\fs  (\mu \times \nu) \leq \min\Big\{ k\theta +\fs \nu ,  \ \fs \mu  + (d-k)\theta \Big\}
\]
for all $\theta \in [0,1]$. In particular, $\fd  (\mu \times \nu) = \min \{  \fd \mu ,   \fd \nu   \}$.
\end{thm}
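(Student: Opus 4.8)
The plan is to exploit the factorisation of the Fourier transform of a product measure, namely $\widehat{\mu \times \nu}(z) = \widehat\mu(z_1)\,\widehat\nu(z_2)$ where we write $z = (z_1,z_2) \in \mathbb{R}^k \times \mathbb{R}^{d-k}$, and to decouple the radial weight $|z|^{s/\theta - d}$ by splitting the domain of integration according to whether $|z_1| \geq |z_2|$ or $|z_2| \geq |z_1|$. I treat $\theta \in (0,1]$ first and recover $\theta = 0$ at the end. On the region $|z_1| \geq |z_2|$ we have $|z| \approx |z_1|$, and symmetrically on the other region, so that in each region the weight reduces to a power of a single coordinate. Writing $a = \fs\mu$ and $b = \fs\nu$, the point is that after this reduction the remaining integral is, up to constants, a one-factor energy $\mathcal{J}_{\,\cdot\,,\theta}$ of a single marginal, whose finiteness is governed directly by the definition of the Fourier spectrum.

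For the lower bound I would estimate the contribution of the region $|z_1| \geq |z_2|$ by
\[
\int_{\mathbb{R}^k} |\widehat\mu(z_1)|^{2/\theta}\, |z_1|^{s/\theta - d} \Big( \int_{|z_2| \leq |z_1|} |\widehat\nu(z_2)|^{2/\theta} \, dz_2 \Big) \, dz_1,
\]
and control the inner integral using the averaged-decay identity $\underline{F}_\nu(\theta) = \min\{(d-k)\theta, \fs\nu\}$ from \cite[Theorems 4.1--4.2]{fourierspec}, now with ambient dimension $d-k$. This gives $\int_{|z_2| \leq R} |\widehat\nu(z_2)|^{2/\theta} \, dz_2 \lesssim R^{(d-k) - \alpha/\theta}$ for every $\alpha < \min\{(d-k)\theta, b\}$, and substituting $R = |z_1|$ collapses the displayed quantity to a constant multiple of $\mathcal{J}_{s - \alpha, \theta}(\mu)^{1/\theta}$. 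This is finite whenever $s - \alpha < a$, so letting $\alpha \uparrow \min\{(d-k)\theta, b\}$ shows the region $|z_1| \geq |z_2|$ is harmless as long as $s < \min\{a + (d-k)\theta, a + b\}$. The symmetric estimate on $|z_2| \geq |z_1|$ contributes the threshold $\min\{b + k\theta, a + b\}$, and intersecting the two yields $\mathcal{J}_{s,\theta}(\mu \times \nu) < \infty$ for all $s < \min\{k\theta + b, a + (d-k)\theta, a + b\}$, which is exactly the claimed lower bound.

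For the upper bound it suffices, by the symmetry between the two factors, to show $\fs(\mu \times \nu) \leq a + (d-k)\theta$. Here I would exploit that $\nu$ is a nonzero finite measure, so $\widehat\nu(0) = \|\nu\| > 0$ and by continuity $|\widehat\nu(z_2)| \gtrsim 1$ on a ball $|z_2| \leq \delta$. Restricting the energy integral for $\mu \times \nu$ to the slab $\{|z_2| \leq \delta\}$ and using $|z| \approx |z_1|$ for $|z_1| \geq \delta$ bounds it below by a constant multiple of $\mathcal{J}_{s - (d-k)\theta, \theta}(\mu)^{1/\theta}$, which diverges as soon as $s > a + (d-k)\theta$. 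Thus $\mathcal{J}_{s,\theta}(\mu \times \nu) = \infty$ for such $s$, and running the symmetric slab argument in the first set of coordinates gives $\fs(\mu \times \nu) \leq k\theta + b$ as well.

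Finally, the case $\theta = 0$, and hence the displayed identity for $\fd$, follows by the same region split run with the supremum energy $\mathcal{J}_{s,0}(\mu) = \sup_z |\widehat\mu(z)|^2 |z|^s$ in place of the integral: on $|z_1| \geq |z_2|$ the bound $|\widehat\mu(z_1)| \lesssim |z_1|^{-a'/2}$ for $a' < \fd\mu$ together with $|\widehat\nu| \leq \|\nu\|$ gives decay $|z|^{-a'/2}$, and symmetrically, so $\fd(\mu \times \nu) \geq \min\{\fd\mu, \fd\nu\}$; testing along the coordinate axes $z_2 = 0$ and $z_1 = 0$, where $\widehat\nu(0),\widehat\mu(0) \neq 0$, gives the matching upper bound. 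Equivalently, one evaluates the two spectrum bounds at $\theta = 0$, both of which collapse to $\min\{\fd\mu, \fd\nu\}$. The only genuine obstacle is the radial coupling through $|z| = (|z_1|^2 + |z_2|^2)^{1/2}$; once this is dissolved by splitting into the regions $|z_1| \geq |z_2|$ and $|z_2| \geq |z_1|$, each piece is handled by a clean appeal to the Fourier spectrum of a single marginal and to the averaged-decay identity for $\underline{F}$.
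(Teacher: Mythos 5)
Your proposal is correct and follows essentially the same route as the paper: the same split of $\mathbb{R}^d$ into the regions $|z_1|\geq |z_2|$ and $|z_2|\geq|z_1|$, the same appeal to the identity $\underline{F}_m(\theta)=\min\{n\theta,\fs m\}$ to bound the inner integral and reduce to a single-marginal energy for the lower bound, and the same slab argument using $|\widehat\nu|\gtrsim 1$ near the origin for the upper bound. No substantive differences to report.
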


\begin{proof}
Throughout this proof we  write $z=(x,y) \in \mathbb{R}^d$ with $x \in \mathbb{R}^k$ and $y \in \mathbb{R}^{d-k}$.  Then
\[
\widehat{\mu \times \nu} (z) = \int e^{-2\pi i (x \cdot u + y \cdot v)} \, d \mu (u) \, d \nu (v) = \widehat \mu (x) \widehat \nu (y).
\]
This gives 
\[
|\widehat{\mu \times \nu} (z)| \lesssim \max\{ |\widehat \mu (x)|, \,  |\widehat \nu (y)|\}
\]
and by letting $|z| \to \infty$ keeping either $x$ or $y$ fixed equal to 0 shows that this weaker bound cannot be improved in general.  This yields
\[
\fd (\mu \times \nu) = \min\{ \fd \mu , \ \fd \nu\}
\]
which is the claim for $\theta = 0$.  From now on fix $\theta \in (0,1]$.

We begin with the lower bound. For this we  recall    \cite[Theorems 4.1--4.2]{fourierspec} which established that for a finite Borel measure $m$ on $\mathbb{R}^n$
\[
\underline{F}_m(\theta) = \min\{n \theta , \, \fs m\}.
\]
Therefore, for  $\eps>0$ which we now fix,
\begin{equation} \label{stricye}
\int_{\substack{u\in \mathbb{R}^{n}: \\ |u| \leq R} }  | \widehat m(u)  |^{2/\theta} \, dz \lesssim_\eps R^{n-n\wedge  \frac{\fs m}{ \theta} +\eps}.
\end{equation}
We will apply this estimate twice with   $m$ equal to $\mu$ and $\nu$, respectively.  First splitting $\rd$ into two half-spaces determined by the relative size of $x$ and $y$ and then applying \eqref{stricye}
\begin{align*}
\mathcal{J}_{s,\theta}( \mu \times \nu)^{1/\theta} &\leq \int_{\substack{z = (x,y)\in \mathbb{R}^k \times \mathbb{R}^{d-k}  : \\
|x| \geq |y|}}   | \widehat \mu(x)  |^{2/\theta}| \widehat \nu(y)  |^{2/\theta} |x|^{s/\theta-d} \, dz \\ 
& \qquad   \qquad   \qquad + \  \int_{\substack{z = (x,y)\in \mathbb{R}^k \times \mathbb{R}^{d-k}  : \\
|x| \leq |y|}}    | \widehat \mu(x)  |^{2/\theta}| \widehat \nu(y)  |^{2/\theta} |y|^{s/\theta-d} \, dz\\
&= \int_{ x\in \mathbb{R}^k }   | \widehat \mu(x)  |^{2/\theta}  |x|^{s/\theta-d} \int_{\substack{y\in \mathbb{R}^{d-k}: \\ |y| \leq |x| } } | \widehat \nu(y)  |^{2/\theta}  \, dy \, dx \\ 
& \qquad  \qquad   \qquad  + \  \int_{y \in \mathbb{R}^{d-k} }  |\widehat \nu(y)  |^{2/\theta} |y|^{s/\theta-d} \int_{\substack{x\in \mathbb{R}^{k}: \\ |x| \leq |y| } }  | \widehat \mu(x)  |^{2/\theta} \, dx \, dy \\
&\lesssim_\eps \int_{ x\in \mathbb{R}^k }   | \widehat \mu(x)  |^{2/\theta}  |x|^{s/\theta-d} |x|^{d-k-(d-k) \wedge \frac{\fs \nu}{\theta}+\eps} \, dx \\ 
& \qquad  \qquad   \qquad  + \  \int_{y \in \mathbb{R}^{d-k} }  |\widehat \nu(y)  |^{2/\theta} |y|^{s/\theta-d} |y|^{k-k \wedge \frac{\fs \mu }{\theta} +\eps} \, dy \\
&= \int_{ x\in \mathbb{R}^k }   | \widehat \mu(x)  |^{2/\theta}  |x|^{(s-(d-k)\theta \wedge \fs \nu +\eps \theta )/\theta-k}   \, dx \\ 
& \qquad  \qquad   \qquad  + \  \int_{y \in \mathbb{R}^{d-k} }  |\widehat \nu(y)  |^{2/\theta} |y|^{(s-k\theta \wedge \fs \mu+\eps \theta ) /\theta-(d-k)}   \, dy \\
& < \infty
\end{align*}
provided
\[
s-  (d-k)\theta \wedge \fs \nu +\eps \theta < \fs \mu 
\]
and
\[
s-k\theta \wedge \fs \mu+\eps \theta  < \fs \nu,
\]
that is, provided
\[
s < \min\Big\{ k\theta +\fs \nu ,  \ \fs \mu  + (d-k)\theta,  \  \fs \mu + \fs \nu\Big\} -\eps\theta.
\]
This proves the  desired lower bound upon letting $\eps \to 0$.

We turn our attention to the upper bound.  Recall that $\widehat \nu $ is a continuous function (this follows easily by the dominated convergence theorem).  Also, $\widehat \nu (0) = \nu(\mathbb{R}^{d-k})>0$ and therefore there exists $\eps=\eps(\nu) \in (0,1)$ such that $|\widehat \nu (y)| \geq \nu(\mathbb{R}^{d-k})/2>0$ for all $y \in B_{d-k}(0,\eps)$, where $B_{d-k}$ denotes the    open ball  in $\mathbb{R}^{d-k}$.  Then
\begin{align*}
\mathcal{J}_{s,\theta}( \mu \times \nu)^{1/\theta} &\geq \int_{\mathbb{R}^k} \int_{\mathbb{R}^{d-k}}  | \widehat \mu(x)  |^{2/\theta}| \widehat \nu(y)  |^{2/\theta} (|x| \vee |y|)^{s/\theta-d}\, dy \, dx \\
& \gtrsim \int_{\mathbb{R}^k \setminus B_k(0,1)}  \int_{B_{d-k}(0,\eps)}  | \widehat \mu(x)  |^{2/\theta} |x|^{s/\theta-d} \,dy \, dx \\
& \approx \int_{\mathbb{R}^k \setminus B_k(0,1)} | \widehat \mu(x)  |^{2/\theta} |x|^{(s-(d-k)\theta)/\theta-k} \, dx \\
&=\infty
\end{align*}
whenever $s-(d-k)\theta > \fs \mu$, proving $\fs (\mu \times \nu) \leq \fs \mu + (d-k)\theta$. The symmetric bound $\fs (\mu \times \nu) \leq k \theta + \fs \nu$ may be obtained similarly.
\end{proof}

Theorem \ref{product} gives a precise formula for $\fs  (\mu \times \nu)$ in the case where either 
\[
\fs \mu \geq k \theta \qquad \text{ or } \qquad \fs \nu \geq (d-k) \theta.
\]
When neither of these conditions hold, we cannot expect a precise formula, recall \eqref{hdbe}.  However, in this case we can improve the upper bound by using Strichartz' upper averaged Fourier dimensions.

\begin{thm}\label{product2}
Let $1 \leq k<d$ be integers, and  $\mu$ and  $\nu$ be   finite Borel measures on  $\mathbb{R}^k$  and $\mathbb{R}^{d-k}$, respectively.  Then
\[
\fs \mu + \fs \nu \leq \fs  (\mu \times \nu) \leq  \max\Big\{ \overline{F}_\mu(\theta) +\fs \nu ,  \ \fs \mu  +\overline{F}_\nu(\theta) \Big\}
\]
for all $\theta \in [0,1]$.
\end{thm}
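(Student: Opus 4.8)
The plan is to read off the lower bound from Theorem \ref{product} and to prove the upper bound by refining the upper-bound argument given there, replacing the crude estimate ``$|\widehat\nu(y)|\gtrsim 1$ on a fixed ball'' by the sharp lower average for $\int_{|y|\le R}|\widehat\nu|^{2/\theta}$ that is encoded by $\overline F_\nu(\theta)$. I fix $\theta\in(0,1]$ and, as in the proof of Theorem \ref{product}, write $z=(x,y)\in\mathbb{R}^k\times\mathbb{R}^{d-k}$, so that $\widehat{\mu\times\nu}(z)=\widehat\mu(x)\widehat\nu(y)$ and $|z|\approx|x|\vee|y|$.

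For the lower bound I would simply invoke Theorem \ref{product}, which gives $\fs(\mu\times\nu)\ge\min\{k\theta+\fs\nu,\ \fs\mu+(d-k)\theta,\ \fs\mu+\fs\nu\}$. In the regime singled out by the preceding discussion---where neither $\fs\mu\ge k\theta$ nor $\fs\nu\ge(d-k)\theta$ holds---the third term realises the minimum, and so $\fs\mu+\fs\nu\le\fs(\mu\times\nu)$ follows at once.

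For the upper bound I would argue region by region. Recalling from \cite[Theorems 4.1--4.2]{fourierspec} and the definition of $\overline F_\nu(\theta)$ that for every $\beta>\overline F_\nu(\theta)$ one has, for all large $R$, the lower estimate $\int_{|y|\le R}|\widehat\nu(y)|^{2/\theta}\,dy\gtrsim R^{(d-k)-\beta/\theta}$, I restrict the energy integral to the region where $|x|\ge|y|$ (there $|z|\approx|x|$) and perform the inner $y$-integral:
\[
\mathcal{J}_{s,\theta}(\mu\times\nu)^{1/\theta}\ \gtrsim\ \int_{|x|\ge 1}|\widehat\mu(x)|^{2/\theta}|x|^{s/\theta-d}\left(\int_{|y|\le|x|}|\widehat\nu(y)|^{2/\theta}\,dy\right)dx\ \gtrsim\ \int_{|x|\ge R_0}|\widehat\mu(x)|^{2/\theta}|x|^{(s-\beta)/\theta-k}\,dx,
\]
which diverges as soon as $s-\beta>\fs\mu$. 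Letting $\beta\downarrow\overline F_\nu(\theta)$ yields $\fs(\mu\times\nu)\le\fs\mu+\overline F_\nu(\theta)$, and the symmetric choice of region $\{|y|\ge|x|\}$ gives $\fs(\mu\times\nu)\le\overline F_\mu(\theta)+\fs\nu$. Together these establish the claimed bound (and in fact this argument delivers the smaller quantity $\min\{\overline F_\mu(\theta)+\fs\nu,\ \fs\mu+\overline F_\nu(\theta)\}$, which a fortiori gives the stated maximum).

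The main point to get right is the $\limsup$/$\liminf$ bookkeeping: one must use that $\overline F_\nu(\theta)$ controls the $\liminf$ of $\log\!\big(\int_{|y|\le R}|\widehat\nu|^{2/\theta}\big)/\log R$, so that $\int_{|y|\le R}|\widehat\nu|^{2/\theta}\,dy\gtrsim R^{(d-k)-\beta/\theta}$ holds for \emph{all} sufficiently large $R$, not merely along a sequence---this is exactly what is needed to integrate the estimate against $|\widehat\mu(x)|^{2/\theta}$ over the continuum of radii $|x|\ge R_0$. A secondary point is to confirm that the contributions near the origin are harmless: since $\widehat\mu,\widehat\nu$ are bounded and continuous and $s>0$, the pieces over $\{|x|\le R_0\}$ and over $\{|y|\le\varepsilon\}$ are finite, so divergence of the tail genuinely forces $\mathcal{J}_{s,\theta}(\mu\times\nu)=\infty$.
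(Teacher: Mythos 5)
Your proposal is correct and follows essentially the same route as the paper: the lower bound is read off from Theorem \ref{product} (valid precisely in the regime where $\fs \mu + \fs \nu$ realises the minimum there, as you note), and the upper bound splits the energy integral according to whether $|x| \geq |y|$ or $|x| \leq |y|$ and inserts the estimate $\int_{|y| \leq R} |\widehat{\nu}(y)|^{2/\theta} \, dy \gtrsim R^{(d-k)-\beta/\theta}$, valid for \emph{all} large $R$ whenever $\beta > \overline{F}_\nu(\theta)$, exactly as in the paper. Your remark that the argument in fact delivers the minimum of the two sums rather than the maximum is also implicit in the paper's own computation, which only requires one of the two regional integrals to diverge.
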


\begin{proof}
The lower bound comes from Theorem \ref{product} and is included for aesthetic reasons.  We now prove the upper bound.  For fixed  $\eps>0$,
\begin{align*}
\mathcal{J}_{s,\theta}( \mu \times \nu)^{1/\theta} &\gtrsim \int_{\substack{z = (x,y)\in \mathbb{R}^k \times \mathbb{R}^{d-k}  : \\
|x| \geq |y|}}   | \widehat \mu(x)  |^{2/\theta}| \widehat \nu(y)  |^{2/\theta} |x|^{s/\theta-d} \, dz \\ 
& \qquad   \qquad   \qquad + \  \int_{\substack{z = (x,y)\in \mathbb{R}^k \times \mathbb{R}^{d-k}  : \\
|x| \leq |y|}}    | \widehat \mu(x)  |^{2/\theta}| \widehat \nu(y)  |^{2/\theta} |y|^{s/\theta-d} \, dz\\
&\gtrsim  \int_{ x\in \mathbb{R}^k }   | \widehat \mu(x)  |^{2/\theta}  |x|^{s/\theta-d} \int_{\substack{y\in \mathbb{R}^{d-k}: \\ |y| \leq |x| } } | \widehat \nu(y)  |^{2/\theta}  \, dy \, dx \\ 
& \qquad  \qquad   \qquad  + \  \int_{y \in \mathbb{R}^{d-k} }  |\widehat \nu(y)  |^{2/\theta} |y|^{s/\theta-d} \int_{\substack{x\in \mathbb{R}^{k}: \\ |x| \leq |y| } }  | \widehat \mu(x)  |^{2/\theta} \, dx \, dy \\
&\gtrsim_\eps \int_{ x\in \mathbb{R}^k }   | \widehat \mu(x)  |^{2/\theta}  |x|^{s/\theta-d} |x|^{d-k-\overline{F}_\nu(\theta)/\theta-\eps} \, dx \\ 
& \qquad  \qquad   \qquad  + \  \int_{y \in \mathbb{R}^{d-k} }  |\widehat \nu(y)  |^{2/\theta} |y|^{s/\theta-d} |y|^{k-\overline{F}_\mu(\theta)/\theta - \eps} \, dy \\
&= \int_{ x\in \mathbb{R}^k }   | \widehat \mu(x)  |^{2/\theta}  |x|^{(s-\overline{F}_\nu(\theta)-\eps \theta )/\theta-k}   \, dx \\ 
& \qquad  \qquad   \qquad  + \  \int_{y \in \mathbb{R}^{d-k} }  |\widehat \nu(y)  |^{2/\theta} |y|^{(s-\overline{F}_\mu(\theta) - \eps \theta ) /\theta-(d-k)}   \, dy \\
& = \infty
\end{align*}
provided
\[
s-  \overline{F}_\nu(\theta) -\eps \theta > \fs \mu 
\]
or 
\[
s-\overline{F}_\mu(\theta) - \eps \theta  > \fs \nu,
\]
that is, provided
\[
s > \max\Big\{ \overline{F}_\mu(\theta) +\fs \nu ,  \ \fs \mu  +\overline{F}_\nu(\theta) \Big\} +\eps\theta.
\]
This proves the  desired upper bound upon letting $\eps \to 0$.
\end{proof}

\section{Dimensions of product sets}

Given the results of the previous section, it is natural to consider the Fourier spectrum of product sets.  We get the lower bounds for free by applying the results for measures but there is an extra subtlety in finding the upper bounds:~we do not know \emph{a priori} that the Fourier spectrum of a product set is realised by the Fourier spectrum of product measures. We get around this by disintegrating an arbitrary measure on the product.  Another subtlety in the case of sets is that we are naturally led to consider the \emph{extended Fourier spectrum} defined by
\[
\efs X =   \sup \{ \fs \mu   :  \textup{spt}(\mu) \subseteq X  \}
\]
and \emph{extended Fourier dimension} defined by
\[
\efd X =   \sup \{ \fd \mu   :  \textup{spt}(\mu) \subseteq X  \}.
\]
Note that if $X \subseteq \rd$ has Lebesgue measure zero, then $\efs X = \fs X$.  This is because  $\sd \mu>d$ implies that $\mu$ is supported on a set of positive measure; see \cite[Theorem 5.4]{mattila}.  Moreover, if $X$ has non-empty interior, then $\efs X = \efd X = \infty$ for all $\theta \in [0,1]$.  This is because if $X$ has non-empty interior, then it supports a measure with arbitrarily fast Fourier decay, e.g.~$f dz$ where $f$ is a Schwartz function supported   in the interior of $X$. In general $\fs X = \min\{\efs X,d\}$ and $\fd X = \min\{\efd X,d\}$.  Also, note that $\efd X, \efs X \in [0,2d] \cup\{\infty\}$ for all $\theta \in [0,1]$.  This is because if $X$ supports a measure $\mu$ with $\sd \mu >2d$ then $X$ has non-empty interior, in which case $\efd X = \infty$; see \cite[Theorem 5.4]{mattila}. 

 We do not know how to construct sets with extended Fourier dimension strictly exceeding $d$ but finite.  

\begin{ques}
Does there exist a  (compact) set $X \subseteq \rd$ with $d<\efs X \leq 2d$ for some $\theta \in [0,1]$? 
\end{ques}

The next theorem is our main result on the Fourier spectrum of product sets.

\begin{thm} \label{productset}
Let $1 \leq k<d$ be integers,  and $X \subseteq \mathbb{R}^k$ and  $Y \subseteq \mathbb{R}^{d-k}$ be compact sets.  Then
\[
\fs  (X \times Y) \geq \min\Big\{ k\theta +\efs Y ,  \ \efs X  + (d-k)\theta,  \  \efs X + \efs Y , \ d\Big\}
\]
and
\[
\fs  (X \times Y)  \leq \min\Big\{ k\theta +\efs Y ,  \ \efs X  + (d-k)\theta, \ d \Big\}
\]
for all $\theta \in [0,1]$.  In particular,  $\fd  (X \times Y) = \min \{  \efd X ,   \efd Y , d  \}$ and  if both $X$ and $Y$ have zero Lebesgue measure (in the appropriate ambient dimension), then $\fd  (X \times Y) = \min \{  \fd X ,   \fd Y   \}$.   
\end{thm}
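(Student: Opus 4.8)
The plan is to prove Theorem \ref{productset} by combining the measure-level results of Theorems \ref{product} and \ref{product2} with a disintegration argument to handle the upper bound. The lower bound comes almost for free: given any $\eps > 0$, I can choose finite Borel measures $\mu$ on $X$ and $\nu$ on $Y$ with $\fs \mu > \efs X - \eps$ and $\fs \nu > \efs Y - \eps$ (truncating the extended spectra at $d$ where necessary). The product $\mu \times \nu$ is supported on $X \times Y$, so applying the lower bound of Theorem \ref{product} and taking suprema gives
\[
\fs (X \times Y) \geq \min\Big\{ k\theta + \efs Y, \ \efs X + (d-k)\theta, \ \efs X + \efs Y, \ d \Big\}.
\]

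For the upper bound, the subtlety flagged in the text is that an arbitrary measure $\lambda$ on $X \times Y$ need not be a product measure, so I cannot directly invoke the upper bound of Theorem \ref{product}. The key step is disintegration: writing $z = (x,y) \in \mathbb{R}^k \times \mathbb{R}^{d-k}$, I would project $\lambda$ onto its first marginal $\mu$ (supported on $X$) and disintegrate $\lambda = \int \lambda_x \, d\mu(x)$, where each conditional measure $\lambda_x$ is supported on $\{x\} \times Y$, i.e.\ essentially on $Y$. The goal is to bound $\mathcal{J}_{s,\theta}(\lambda)$ from below, or equivalently to show $\mathcal{J}_{s,\theta}(\lambda) = \infty$ whenever $s$ exceeds $\min\{k\theta + \efs Y, \ \efs X + (d-k)\theta\}$, since that forces $\fs \lambda$ to lie below this threshold. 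I would mimic the upper-bound computation from the proof of Theorem \ref{product}: split $\mathbb{R}^d$ according to whether $|x| \geq |y|$, and on the region where $y$ is small relative to $x$, use that $\widehat \lambda(x,y) \approx \widehat \mu(x)$ for $y$ near $0$ (via continuity of $\widehat\lambda$ and the fact that $\widehat\lambda(x,0) = \widehat\mu(x)$) to reduce the $x$-integral to $\int |\widehat \mu(x)|^{2/\theta} |x|^{(s-(d-k)\theta)/\theta - k}\,dx$, which diverges once $s - (d-k)\theta > \fs\mu$. Taking the supremum over $\mu$ arising as first marginals — which range over all measures on $X$ — yields the bound $s - (d-k)\theta > \efs X$, and the symmetric argument gives $s > k\theta + \efs Y$.

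The main obstacle will be making the continuity-based localisation rigorous in the disintegration setting: I need that for a fixed direction the Fourier transform $\widehat\lambda(x,y)$ does not decay faster in the $x$-variable than $\widehat\mu(x)$ does, uniformly over a ball of $y$'s near the origin. The cleanest route is to pick a Schwartz bump $\phi$ on $\mathbb{R}^{d-k}$ with $\widehat\phi \geq 0$ compactly supported near $0$ and consider the pushforward or a suitable smoothing, but more directly one integrates $|\widehat\lambda(x,y)|^{2/\theta}$ over a small $y$-ball and uses that the first marginal's transform is recovered as $\widehat\mu(x) = \int \widehat\lambda(x,y')\,$ against an approximate identity. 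One must verify that this localised lower bound survives the exponent $2/\theta \geq 2$ and the integration against $|x|^{s/\theta - d}$; the estimate is essentially the same as in Theorem \ref{product} once the reduction to the marginal is in place.

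The final two conclusions are then immediate corollaries. Setting $\theta = 0$ collapses all three terms in the minimum (since $\efs[0] = \efd$ and the $k\theta$, $(d-k)\theta$ terms vanish), giving $\fd(X \times Y) = \min\{\efd X, \efd Y, d\}$; and when both $X$ and $Y$ have zero Lebesgue measure in their ambient dimensions, the remark preceding the theorem gives $\efd X = \fd X$ and $\efd Y = \fd Y$, while $\min\{\fd X, \fd Y\} \leq \min\{k, d-k\} \leq d$, so the truncation at $d$ is inactive and $\fd(X \times Y) = \min\{\fd X, \fd Y\}$.
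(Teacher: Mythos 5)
Your overall strategy matches the paper's: the lower bound by feeding near-optimal measures on $X$ and $Y$ into Theorem \ref{product}, and the upper bound by disintegrating an arbitrary measure $m$ on $X\times Y$ over its first marginal $\mu$ and showing $\fs m \leq \fs\mu + (d-k)\theta$. The difference, and the one place where your write-up has a real gap, is the device used to pass from $\widehat m$ to $\widehat\mu$. Your first proposal --- ``use that $\widehat\lambda(x,y)\approx\widehat\mu(x)$ for $y$ near $0$ via continuity'' --- does not work as stated: from the disintegration one gets
\[
\bigl|\widehat m(x,y)-\widehat\mu(x)\bigr| \;=\; \Bigl|\int_X e^{-2\pi i x\cdot u}\bigl(\widehat{\nu_u}(y)-1\bigr)\,d\mu(u)\Bigr| \;\lesssim\; |y|,
\]
an \emph{additive} error uniform in $x$, whereas in the product-measure case of Theorem \ref{product} the factorisation $\widehat{\mu\times\nu}=\widehat\mu\,\widehat\nu$ gives a \emph{multiplicative} lower bound $|\widehat{\mu\times\nu}(x,y)|\gtrsim|\widehat\mu(x)|$ on a fixed $y$-ball. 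Since $\widehat\mu(x)$ typically tends to $0$ as $|x|\to\infty$, no fixed $y$-ball makes the additive error negligible relative to $|\widehat\mu(x)|$, so the localisation collapses. The paper sidesteps this entirely by invoking the discrete representation of the energies from \cite[Corollary 4.1]{ana}, which expresses $\mathcal{J}_{s,\theta}(m)^{1/\theta}$ as a lattice sum over $\alpha\mathbb{Z}^d$ and allows one to discard all lattice points except those of the form $(x,0)$, where $\widehat m(x,0)=\widehat\mu(x)$ holds \emph{exactly}; the retained sublattice sum is then comparable to $\mathcal{J}_{s-(d-k)\theta,\theta}(\mu)^{1/\theta}$.

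Your second suggestion --- pairing $\widehat m(x,\cdot)$ against a bump $\widehat\phi$ with compact support and using Parseval/H\"older --- can be made to work and is a legitimate alternative to the discrete-energy route, but note it does not recover $\widehat\mu(x)$ on the nose: one obtains the transform of the weighted marginal $c_u\,d\mu(u)$ with $c_u=\int\phi\,d\nu_u$, and since $\widehat\phi$ compactly supported forces $\phi$ to be analytic you cannot have $c_u\equiv 1$; you must instead observe that $1\leq c_u\leq\|\phi\|_\infty$ so the weighted marginal is still a finite Borel measure on $X$ and its Fourier spectrum is still bounded by $\efs X$. With that adjustment, and H\"older with exponents $2/\theta$ and $2/(2-\theta)$ to pass the pairing inside $\int_{|y|\leq C}|\widehat m(x,y)|^{2/\theta}\,dy$, your argument closes. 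As written, however, the proposal hedges between a failing localisation and a fixable one without committing to the details that make the difference. The deduction of the two ``in particular'' statements at $\theta=0$ is correct and matches the paper.
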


\begin{proof}
The lower bound follows from Theorem \ref{product} and the definition of the Fourier spectrum of a set.  For the upper bound, let $m$ be an arbitrary Borel probability measure on $X \times Y$ and let $\mu$ denote the projection of $m$ onto $X$. Then, by the disintegration theorem, we may find a collection of  Borel probability measures $\{\nu_x : x \in X\}$ with each $\nu_x$ supported on $Y$ (not necessarily fully supported) and with the function $x \mapsto \nu_x$ Borel measurable such that  $m$ disintegrates as
\[
\int_{X \times Y} f(u,v) \, d m(u,v) = \int_X   \int_{Y} f(u,v) \, d \nu_u(v) \, d\mu(u)
\]
for $f \in L^1(m)$.  Therefore
\begin{align*}
\widehat{m} (x,y) &= \int_X   \int_{Y} e^{-2\pi i (x  \cdot u + y \cdot v)}  \,    d \nu_u(v) \, d\mu(u)  \\ 
&=\int_X    e^{-2\pi i x  \cdot u }  \widehat{\nu_u}(y) \,     d\mu(u)  
\end{align*}
and, in particular, 
\[
\widehat{m} (x,0) =\int_X    e^{-2\pi i x  \cdot u }  \widehat{\nu_u}(0) \,     d\mu(u) =  \widehat \mu (x)
\]
since $ \widehat{\nu_u}(0) = \nu_u(Y) = 1$ for all $u \in X$.   To relate the Fourier spectrum of $m$ to the Fourier spectrum of $\mu$ (and thus $X$),  we appeal to a discrete representation of the energies proved in \cite[Corollary 4.1]{ana}.  There it was shown that if $\nu$ is a finite Borel  measure supported on a compact set $A \subseteq \rd$ with diameter denoted by $|A|$, then for all $0<\alpha<1/|A|$
\[
\mathcal{J}_{s,\theta}( \nu)^{1/\theta}  \approx_\alpha |\widehat \nu (0)|^{2/\theta} + \sum_{z \in \alpha \mathbb{Z}^d \setminus\{0\}} |\widehat \nu(z)|^{2/\theta} |z|^{s/\theta-d}. 
\] 
In particular, since $X \times Y$ is compact, we can choose $0<\alpha<1/|X\times Y| \leq 1/|X|$ such that
\begin{align*}
\mathcal{J}_{s,\theta}( m)^{1/\theta} &  \approx_\alpha |\widehat m (0)|^{2/\theta} + \sum_{z \in \alpha \mathbb{Z}^d \setminus\{0\}} |\widehat m(z)|^{2/\theta} |z|^{s/\theta-d}\\
&\gtrsim |\widehat m (0,0)|^{2/\theta} + \sum_{x \in \alpha \mathbb{Z}^k \setminus\{0\}} |\widehat m(x,0)|^{2/\theta} |x|^{(s-(d-k)\theta)/\theta-k} \\
&= |\widehat \mu(0)|^{2/\theta} + \sum_{x \in \alpha \mathbb{Z}^k \setminus\{0\}} |\widehat \mu(x)|^{2/\theta} |x|^{(s-(d-k)\theta)/\theta-k} \\
&\approx_\alpha \mathcal{J}_{s-(d-k)\theta,\theta}( \mu)^{1/\theta} \\
&=\infty
\end{align*}
whenever $s-(d-k)\theta > \fs \mu$, proving $\fs m \leq  \fs \mu + (d-k)\theta $.  Since $m$ was arbitrary and $\mu$ is supported on $X$ this proves that $\fs (X \times Y ) \leq  \efs X+ (d-k)\theta $.  The analogous  upper bound $\fs (X \times Y)  \leq k\theta + \efs Y$ follows by a symmetric argument. Finally, the upper bound of $d$ holds since $X \times Y \subseteq \rd$.
\end{proof}

\section{A Fourier analytic characterisation of  box dimension}

The fact that the Hausdorff dimension of a Borel set can be expressed in terms of the Sobolev dimension of measures it supports is well-known and goes back to the potential theoretic method and Frostman's lemma.  By considering how the Sobolev dimension relates to the Fourier spectrum and to Strichartz' averaged Fourier dimensions, this leads to various Fourier analytic characterisations of the Hausdorff dimension.  We record some of these here for convenience; see \cite{falconer, mattila, fourierspec} for the details.

\begin{prop}\label{boxdimthm}
Let $X \subseteq \rd$ be a Borel set.  Then
\begin{align*}
\hd X &= \dim_\textup{F}^1  X \\
&=\sup \{ \min\{d, \sd \mu\} :  \textup{spt}(\mu) \subseteq X  \}\\
&=\sup \{ \underline{F}_\mu(1) :  \textup{spt}(\mu) \subseteq X  \}\\
&=\sup\left\{ \beta \in  [0,d] : \exists \mu \text{ on $X$ such that } \mathcal{J}_{\beta,1}(\mu) < \infty \right\} \\
&=\sup\left\{ \beta \geq 0 : \exists \mu \text{ on $X$ such that } \int_{|z| \leq R} |\widehat \mu (z)|^{2} \, dz \lesssim_\mu  R^{d-\beta}   \right\}.
\end{align*}
\end{prop}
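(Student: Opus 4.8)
The plan is to establish the listed equalities as a short chain, working down the list, since each connects a supremum taken over measures supported on $X$ and most of the labour is unwinding the definitions of the Fourier spectrum, the Sobolev dimension, and the averaged quantities $\underline{F}_\mu$. The first equality $\hd X = \dim^1_{\mathrm{F}} X$ has already been recorded in the introduction: it follows from $\mathcal{J}_{s,1}(\mu) \approx \mathcal{I}_s(\mu)$ for $0<s<d$ together with the potential-theoretic characterisation of Hausdorff dimension and Frostman's lemma, both of which are cited. I would take this line as given and concentrate on the remaining four identities.

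For the second and third lines I would simply specialise the definitions to $\theta=1$. By definition of the Fourier spectrum of a set, $\dim^1_{\mathrm{F}} X = \sup\{\min\{\dim^1_{\mathrm{F}}\mu, d\} : \textup{spt}(\mu)\subseteq X\}$, and since $\dim^1_{\mathrm{F}}\mu = \sd\mu$ this is exactly the second line. For the third line I would invoke the identity $\underline{F}_\mu(\theta) = \min\{d\theta, \fs\mu\}$ from \cite[Theorems 4.1--4.2]{fourierspec}; at $\theta=1$ it reads $\underline{F}_\mu(1) = \min\{d, \sd\mu\}$, so taking the supremum over $\mu$ turns the second line into the third.

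The fourth and fifth lines are the two genuinely energy-level reformulations, and for each I would first compute the per-measure supremum and then take the supremum over $\mu$. For the fourth line, fix $\mu$ with $\textup{spt}(\mu)\subseteq X$ and examine $\sup\{\beta\in[0,d] : \mathcal{J}_{\beta,1}(\mu)<\infty\}$. By definition $\sd\mu = \sup\{s\geq 0 : \mathcal{J}_{s,1}(\mu)<\infty\}$, so I would observe that for $s>0$ the convergence of $\mathcal{J}_{s,1}(\mu) = \int |\widehat\mu(z)|^2 |z|^{s-d}\,dz$ is governed entirely by the tail over $|z|\geq 1$ (the integral over $|z|<1$ converges for every $s>0$ because $\widehat\mu$ is bounded and $s-d>-d$), and this tail is increasing in $s$. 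Hence finiteness of $\mathcal{J}_{s,1}(\mu)$ is monotone in $s>0$, and the per-measure supremum over $\beta\in[0,d]$ equals $\min\{d,\sd\mu\}$; the supremum over $\mu$ then recovers the second line. For the fifth line, the displayed condition $\int_{|z|\leq R}|\widehat\mu(z)|^2\,dz \lesssim_\mu R^{d-\beta}$ is precisely the defining inequality for $\underline{F}_\mu(1)$, so the per-measure supremum over admissible $\beta$ is exactly $\underline{F}_\mu(1)$, and the supremum over $\mu$ returns the third line.

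The main obstacle is bookkeeping rather than depth: I must justify that for a fixed measure the supremum over $\beta$ collapses cleanly to $\min\{d,\sd\mu\}$, which needs the monotonicity-in-$s$ observation above together with care about the cap at $d$ and the separate treatment of the neighbourhood of the origin, where $|z|^{s-d}$ is integrable for $s>0$ but the exponent changes sign. One must also check that the double suprema, over $\beta$ and over $\mu$, can be collapsed without loss, which is immediate once the per-measure value is identified. The only genuinely substantial input, namely that the Hausdorff dimension of $X$ is realised through Sobolev energies of measures supported on $X$, is supplied by the potential-theoretic method and Frostman's lemma and is quoted rather than reproved.
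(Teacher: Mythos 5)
Your proposal is correct and follows exactly the route the paper intends: the paper offers no written-out proof of Proposition \ref{boxdimthm}, deferring to \cite{falconer, mattila, fourierspec}, and the argument there is precisely your assembly of the potential-theoretic characterisation of $\hd$ for the first line, the definitions of $\fs$ and $\sd$ for the second, the identity $\underline{F}_\mu(1)=\min\{d,\sd\mu\}$ from \cite[Theorems 4.1--4.2]{fourierspec} for the third, and the per-measure monotonicity and sup-interchange bookkeeping for the last two. No gaps.
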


Given the above, it is natural to ask what happens if we consider appropriate duals of the above characterisations of Hausdorff dimension, for example, replacing  the lower version of Strichartz' averaged Fourier dimensions $\underline{F}_\mu(1)$ with the upper $\overline{F}_\mu(1)$.  Our work on dimensions of product measures gives some hints in this direction. Recall that
\[
\hd (X \times Y) \leq \hd X + \textup{Dim} \,  Y
\]
for bounded sets $X,Y$ in Euclidean space and where  $\textup{Dim}$ can be taken to be  either the upper box dimension or the packing dimension, see \cite[Chapter 7]{falconer} and also recall \eqref{hdbe}.  Given   how the upper and lower versions of Strichartz' averaged Fourier dimensions are connected to the dimensions of products, see Theorem \ref{product2}, it is natural to expect that dualising some of the characterisations  above  should lead to either the upper box dimension or the packing dimension.  We provide one such result here which gives a novel Fourier analytic characterisation of the upper box dimension. By dualising our result we obtain a similar characterisation of the lower box dimension. Our proof will rely on recent work of Falconer connecting upper box dimension and capacities \cite{falconerfourier}.  We briefly recall the definition of the upper and lower box dimensions $\ubd X$ and $\lbd X$ for  a bounded set $X \subseteq \rd$; see \cite{falconer} for more details.  Given a small scale $r>0$, let $N_r(X)$ denote the minimum number of sets of diameter $r$ that can cover $X$.  Then
\[
\ubd X = \inf \{ \beta \geq 0 : N_r(X) \lesssim r^{-\beta}\}
\]
and
\[
\lbd X = \sup \{ \beta \geq 0 : N_r(X) \gtrsim r^{-\beta}\}.
\]

\begin{prop} \label{boxxx}
Let $X \subseteq \rd$ be a bounded set.  Then
\begin{align*}
\ubd X &=\inf\left\{ \beta \geq 0 : \inf_{ \textup{$\mu$ on $X$}} \int_{|z| \leq R} |\widehat \mu (z)|^{2} \, dz \gtrsim  R^{d-\beta}   \right\}
\end{align*}
and
\begin{align*}
\lbd X &=\sup\left\{ \beta \geq 0 : \inf_{ \textup{$\mu$ on $X$}} \int_{|z| \leq R} |\widehat \mu (z)|^{2} \, dz \lesssim  R^{d-\beta}   \right\}.
\end{align*}
\end{prop}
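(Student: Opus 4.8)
The plan is to reduce both identities to the single two-sided estimate
\[
\inf_{\mu}\int_{|z|\le R}|\widehat\mu(z)|^2\,dz \;\approx\; \frac{R^d}{N_{1/R}(X)},
\]
valid for all large $R$ with implied constants independent of $R$, where the infimum is over Borel probability measures $\mu$ supported on $X$ (the normalisation is essential, since otherwise the infimum vanishes by scaling). Granting this, both characterisations are immediate bookkeeping. Writing $r=1/R$ and using the estimate, the condition $\inf_\mu\int_{|z|\le R}|\widehat\mu|^2\,dz\gtrsim R^{d-\beta}$ is equivalent to $N_r(X)\lesssim r^{-\beta}$, so the two collections of admissible $\beta$ coincide and their infima agree; by the stated definition $\ubd X=\inf\{\beta\ge0:N_r(X)\lesssim r^{-\beta}\}$ this yields the first identity. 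Dually, $\inf_\mu\int_{|z|\le R}|\widehat\mu|^2\,dz\lesssim R^{d-\beta}$ is equivalent to $N_r(X)\gtrsim r^{-\beta}$, and taking suprema gives $\lbd X$. Thus the entire content sits in the displayed comparison, which is essentially the capacity--covering duality of Falconer \cite{falconerfourier}.

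For the lower bound $\inf_\mu\int_{|z|\le R}|\widehat\mu|^2\,dz\gtrsim R^d/N_{1/R}(X)$, the difficulty is that $\mathbf{1}_{B(0,R)}$ has a sign-changing Fourier transform, so the energy is not manifestly a nonnegative self-interaction. I would remove this by inserting a Fej\'er-type minorant: fix $\eta$ with $0\le\eta\le\mathbf{1}_{B(0,1)}$ and $\widehat\eta\ge0$ (for instance $\eta=\xi\ast\widetilde\xi$ with $\xi$ supported in $B(0,1/2)$, suitably normalised). Then Parseval gives
\[
\int_{|z|\le R}|\widehat\mu(z)|^2\,dz\;\ge\;\int_{\mathbb{R}^d}|\widehat\mu(z)|^2\,\eta(z/R)\,dz\;=\;R^d\iint\widehat\eta(R(x-y))\,d\mu(x)\,d\mu(y).
\]
Since $\widehat\eta\ge0$ and $\widehat\eta\gtrsim1$ on a fixed ball $B(0,c_0)$, the right-hand side is $\gtrsim R^d$ times the $\mu\times\mu$-measure of the strip $\{|x-y|\le c_0/R\}$. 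Covering $X$ by $N_{1/R}(X)$ balls of radius $c_0/(2R)$ and applying Cauchy--Schwarz to the masses of these balls bounds this measure below by $\approx 1/N_{1/R}(X)$, giving the claim uniformly in $\mu$.

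For the matching upper bound I would exhibit a near-optimal measure: take a maximal $(1/R)$-separated subset $\{x_1,\dots,x_M\}$ of $X$, so that $M\approx N_{1/R}(X)$, and set $\mu=\frac1M\sum_i\delta_{x_i}$. Choosing a smooth majorant with Fourier support in $B(0,1/2)$ and bounded below by a constant on $B(0,1)$ lets one dominate $\int_{|z|\le R}|\widehat\mu|^2\,dz$ by $R^d$ times the number of pairs $(i,j)$ with $|x_i-x_j|\le 1/(2R)$; by separation only the diagonal survives, leaving $\lesssim R^d/M\approx R^d/N_{1/R}(X)$. Throughout, covering numbers at scales differing by a fixed constant factor are comparable, so these constants are harmless and absorbed into $\lesssim$. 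I expect the main obstacle to be the lower bound of the second paragraph: producing a genuinely nonnegative kernel that minorises the sharp frequency cutoff while retaining the correct $\approx R^d$ height on scale $1/R$ is exactly the point where the oscillation of $\widehat{\mathbf{1}_{B(0,R)}}$ must be tamed, and it is this capacity estimate that we borrow from \cite{falconerfourier}.
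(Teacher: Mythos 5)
Your argument is correct, but it takes a genuinely different route from the paper. The paper deduces the result from Falconer's capacity machinery: it introduces the box dimension profiles $\ubdp X$, uses the convergence $\ubdp X \nearrow \ubd X$ as $s \to d$, and uses Falconer's Gaussian-weighted Fourier representation of the kernel integral $\iint \min\{1,(r/|x-y|)^s\}\,d\mu\,d\mu$, which it then compares (with an $\eps$-loss in the truncation radius $R^{1+\eps}$, removed at the end by letting $\eps \to 0$ and $s \to d$) to the sharp-cutoff average $\int_{|z|\le R}|\widehat\mu|^2\,dz$. You instead prove directly the scale-by-scale two-sided estimate $\inf_\mu \int_{|z|\le R}|\widehat\mu(z)|^2\,dz \approx R^d/N_{1/R}(X)$: the lower bound via a Fej\'er-type minorant, Parseval for $\mu\ast\tilde\mu$, and Cauchy--Schwarz over a disjointified cover (uniformly in $\mu$, which is exactly what the $\inf_\mu$ inside the braces requires); the upper bound via the uniform discrete measure on a maximal $(1/R)$-separated set tested against a nonnegative band-limited majorant, so that only diagonal pairs survive. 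Both identities then follow by pure bookkeeping, since covering numbers at scales differing by a bounded factor are comparable in $\rd$. This is self-contained, avoids the profiles and the double limit entirely, and in fact yields a strictly stronger conclusion (a uniform two-sided estimate at every scale rather than only the dimension identities); your closing remark that the key capacity estimate must be ``borrowed'' from Falconer undersells the argument, since your second and third paragraphs already constitute a direct proof of it. Two small points worth making explicit in a write-up: the majorant in the upper bound should be taken nonnegative everywhere and $\ge 1$ on $B(0,1)$ (e.g.\ a rescaled $|\widehat g|^2$ with $g$ smooth and compactly supported), and, as you note, the infimum must range over probability measures on $X$ --- which matches the convention in the paper's own proof.
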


\begin{proof}
We first give the proof of the upper box dimension result. Following \cite[(4) and (6)]{falconerfourier}, for a scale $r\in (0,1)$ and a `dimension' $s \in (0,d)$, define the \emph{capacity} $C_r^s(X)$ of $X$ by 
\[
\frac{1}{C_r^s(X)} = \inf\left\{ \iint \min\left\{1, \left(\frac{r}{|x-y|} \right)^s \right\} \, d \mu(x) \, d\mu(y) : \textup{spt}(\mu) \subseteq X \right\}
\]
and the \emph{upper $s$-dimensional box dimension profile} of $X$ by
\[
\ubdp X = \inf \{ \beta \geq 0 : C_r^s(X)  \lesssim r^{-\beta}\}.
\]
It follows from  \cite[Proposition 5.1]{falconerfourier} that 
\begin{equation} \label{profilego}
\ubdp X \nearrow  \ubd X
\end{equation}
as $s \to d$.  (Set $t=n$,  apply \cite[Corollary 2.5]{falconerfourier}, and let $s \to t=n$.)  Let $\eps>0$ and consider a particular Borel probability measure $\mu$ on $X$.  First applying  \cite[Lemma 3.2 and Proposition 3.3]{falconerfourier}, and then writing $R=1/r$,
\begin{align*}
 &\, \hspace{-1cm} \iint \min\left\{1, \left(\frac{r}{|x-y|} \right)^s \right\} \, d \mu(x) \, d\mu(y) \\
 &\approx   r^s \int  |z|^{s-d} \exp \left( -\frac{1}{2} r^2 |z|^2 \right)   |\widehat \mu(z)|^2 \, dz\\
&\approx R^{-s}   \int  |z|^{s-d} \exp \left( -\frac{1}{2}  (|z|/R)^2 \right)   |\widehat \mu(z)|^2 \, dz \\
&\lesssim  R^{-s}   \int_{|z| \leq R^{1+\eps}}  |z|^{s-d}    |\widehat \mu(z)|^2 \, dz  + R^{-s}   \int_{|z| > R^{1+\eps}}  R^{s-d} \exp \left( -\frac{1}{2}  (|z|/R)^2 \right)  dz \\ 
&\lesssim  R^{-s}   \int_{|z| \leq R^{1+\eps}}  |z|^{s-d}    |\widehat \mu(z)|^2 \, dz  + R^{-d}   \int_{|y| > R^\eps}    \exp \left( -\frac{1}{2}  y^2 \right)  R^d dy \\ 
&\lesssim R^{-s}   \int_{|z| \leq R^{1+\eps}}    |\widehat \mu(z)|^2 \, dz  + \frac{O(R^{-\eps})}{\sqrt{\exp(R^{2\eps})}}\\
&\lesssim_\eps R^{-s}   \int_{|z| \leq R^{1+\eps}}    |\widehat \mu(z)|^2 \, dz. \numberthis \label{goodupper}
\end{align*}
In the above we used standard asymptotic properties of the Gauss error function  to deal with the tail.  Similarly,
\begin{align*}
 \iint \min\left\{1, \left(\frac{r}{|x-y|} \right)^s \right\} \, d \mu(x) \, d\mu(y) &\gtrsim   R^{-s}   \int_{|z| \leq R}  R^{s-d}  |\widehat \mu(z)|^2 \, dz  \\
&= R^{-d}   \int_{|z| \leq R}    |\widehat \mu(z)|^2 \, dz \numberthis \label{goodlower}
\end{align*}
noting that the implicit constants above can be taken independent of $\mu$.   Therefore, by \eqref{goodupper}
\begin{align*}
\ubdp X &\geq \inf \left\{ \beta \geq 0 :  \inf_{ \textup{$\mu$ on $X$}} \int_{|z| \leq R^{1+\eps}} |\widehat \mu (z)|^{2} \, dz \gtrsim  R^{s-\beta}   \right\} \\
& \geq (1+\eps) \inf \left\{ \beta \geq 0 :   \inf_{ \textup{$\mu$ on $X$}} \int_{|z| \leq R} |\widehat \mu (z)|^{2} \, dz \gtrsim  R^{d-\beta}   \right\} +s-d(1+\eps)
\end{align*}
and by \eqref{goodlower}
\[
\ubdp X \leq \inf \left\{ \beta \geq 0 :   \inf_{ \textup{$\mu$ on $X$}} \int_{|z| \leq R} |\widehat \mu (z)|^{2} \, dz \gtrsim  R^{d-\beta}   \right\}.
\]
Letting $\eps \to 0$ and then $s \to d$, the desired result    follows from these estimates and \eqref{profilego}.

The lower box dimension result is handled in a completely  analogous way and we omit the details.  
\end{proof}

It remains an interesting question to determine the significance of 
\[
\sup \{ \overline{F}_\mu(1) :  \textup{spt}(\mu) \subseteq X  \} = \inf\left\{ \beta \geq 0 :  \textup{ for all $\mu$ on $X$ } \int_{|z| \leq R} |\widehat \mu (z)|^{2} \, dz \gtrsim_\mu  R^{d-\beta}   \right\}
\]
as a function of $X$ but we have been unable to do this.  That said, it is clear that
\[
\sup \{ \overline{F}_\mu(1) :  \textup{spt}(\mu) \subseteq X  \} \leq \inf\left\{ \beta \geq 0 : \inf_{ \textup{$\mu$ on $X$}} \int_{|z| \leq R} |\widehat \mu (z)|^{2} \, dz \gtrsim  R^{d-\beta}   \right\}.
\]
The potential difference here is that the right hand side asks for the  implicit constants to be independent of $\mu$ whereas the left hand side allows the constants to depend on $\mu$.  In particular, using Propositions \ref{boxdimthm} and \ref{boxxx},
\[
\hd X \leq \sup \{ \overline{F}_\mu(1) :  \textup{spt}(\mu) \subseteq X  \} \leq \ubd X.
\]
We are tempted by the following conjecture, which we have been unable to prove. Recall we write $\pd X$ for the packing dimension of $X$.
\begin{conj}
Let $X \subseteq \rd$ be a Borel set.  Then
\begin{align*}
\pd X &= \sup \{ \overline{F}_\mu(1) :  \textup{spt}(\mu) \subseteq X  \}\\
 &= \inf\left\{ \beta \geq 0 :  \textup{ for all $\mu$ on $X$ } \int_{|z| \leq R} |\widehat \mu (z)|^{2} \, dz \gtrsim_\mu  R^{d-\beta}   \right\}.
\end{align*}
\end{conj}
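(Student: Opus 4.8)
\emph{Reduction to the correlation integral.} The second equality in the conjecture is definitional, so the content is the identity $\pd X = \sup\{\overline{F}_\mu(1) : \textup{spt}(\mu)\subseteq X\}$. My plan is to work throughout with the correlation integral $C_\mu(r) = \int \mu(B(x,r))\,d\mu(x)$. The Gaussian smoothing estimates underlying the proof of Proposition \ref{boxxx} give $R^{-d}\int_{|z|\leq R}|\widehat\mu(z)|^2\,dz \approx C_\mu(1/R)$ up to harmless factors, whence $\overline{F}_\mu(1) = \limsup_{r\to 0} \tfrac{\log C_\mu(r)}{\log r}$. Both inequalities thus become statements about the decay rate of $C_\mu$, and in particular $\overline{F}_\mu(1)\geq s$ amounts to $C_\mu(r_n)\lesssim r_n^{s-o(1)}$ along some sequence $r_n\to 0$.

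\emph{The upper bound.} For $\sup_\mu \overline{F}_\mu(1) \leq \pd X$ I would exploit the countable stability of packing dimension through its modified box-dimension characterisation $\pd X = \inf\{\sup_i \ubd X_i : X = \bigcup_i X_i\}$. Fix $\mu$ on $X$ and $\eps>0$, and choose a decomposition with $\ubd X_i \leq \pd X + \eps$ for every $i$. Writing $\mu_i = \mu|_{X_i}$, monotonicity gives $C_\mu(r) \geq C_{\mu_i}(r)$, and covering $X_i$ by $N_r(X_i)$ sets of diameter $r$ together with Cauchy--Schwarz yields $C_{\mu_i}(r) \geq \mu(X_i)^2/N_r(X_i) \gtrsim \mu(X_i)^2\, r^{\pd X + \eps}$. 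Selecting any $i$ with $\mu(X_i)>0$ and letting $r\to 0$ gives $\overline{F}_\mu(1)\leq \pd X+\eps$, and then $\eps\to 0$. This already sharpens the bound $\sup_\mu\overline{F}_\mu(1)\leq\ubd X$ recorded before the conjecture, and I do not expect it to be the difficulty.

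\emph{The lower bound and the main obstacle.} The hard direction asks, for each $s<\pd X$, for a single measure $\mu$ on $X$ with $C_\mu(r_n)\lesssim r_n^{s-o(1)}$ along some $r_n\to 0$. Since $C_\mu(r)$ is comparable to $\sum_Q \mu(Q)^2$ summed over the scale-$r$ cells $Q$, Cauchy--Schwarz shows this is minimised by measures that are uniform at scale $r_n$; one therefore wants a compact $E\subseteq X$, scales $r_n$, and a measure $\mu$ on $E$ that is simultaneously close to uniform at every $r_n$, with $\mu(B(x,r_n))\approx r_n^{s}$ for $\mu$-most $x$. The plan is to extract $E$ and the sequence $r_n$ from the hypothesis $\pd X>s$ by a tree/Baire-category argument on the dyadic coding of $X$, and to build $\mu$ as a Moran-type measure adapted to those scales.

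The main obstacle lies exactly in reconciling uniformity across scales. The modified box-dimension characterisation only guarantees that in \emph{every} countable cover some piece has large upper box dimension — an infimum-over-covers statement — which does not hand one a set with uniform lower regularity at a common subsequence. Concretely, being near-uniform at scale $r_{n+1}$ forces the mass of a scale-$r_n$ cell to be proportional to its number of descendants, destroying uniformity at scale $r_n$ unless the multiscale branching of $E$ is itself regular; by Cauchy--Schwarz any such irregularity inflates $C_\mu(r_n)$ and depresses $\overline{F}_\mu(1)$. Moreover the available Frostman-type lemmas for packing measure (for instance Joyce--Preiss) control the upper densities $\limsup_r \mu(B(x,r))/r^{s}$, that is, the essential supremum of the upper local dimension, whereas $\overline{F}_\mu(1)$ is a correlation (second-order R\'enyi) quantity governed by the essential infimum. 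Bridging this mismatch — or first passing to a subset whose branching is regular enough that a single uniform measure survives along a common subsequence of scales — is the step I expect to be genuinely hard, and is presumably why the statement remains conjectural.
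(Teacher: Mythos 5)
This statement is one the paper explicitly does \emph{not} prove --- it is stated as a conjecture, with only the bracketing $\hd X \leq \sup\{\overline{F}_\mu(1) : \textup{spt}(\mu)\subseteq X\} \leq \ubd X$ recorded --- so there is no proof of record to compare against; I will assess your attempt on its own terms. Your reduction is sound: writing $C_\mu(r)=\int \mu(B(x,r))\,d\mu(x)$, the two-sided comparison $\int_{|z|\leq R}|\widehat\mu(z)|^2\,dz \approx_d R^d C_\mu(1/R)$ does hold with constants depending only on $d$ (lower bound by testing against $\psi(\cdot/R)$ for a bump $\psi$ with $\widehat\psi\geq 0$, upper bound by Gaussian domination together with the doubling property $C_\mu(2r)\lesssim_d C_\mu(r)$), so indeed $\overline{F}_\mu(1)=\limsup_{r\to 0}\log C_\mu(r)/\log r$. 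Your upper bound $\sup_\mu \overline{F}_\mu(1)\leq \pd X$ is also correct: one may take the pieces $X_i$ in the cover closed and bounded, some piece then carries positive $\mu$-mass, and the Cauchy--Schwarz estimate $C_\mu(r)\geq \mu(X_i)^2/N_r(X_i)$ finishes. This genuinely sharpens the inequality $\sup_\mu\overline{F}_\mu(1)\leq \ubd X$ noted in the paper and is worth writing out carefully as a partial result.

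The gap is the entire other direction $\pd X \leq \sup_\mu \overline{F}_\mu(1)$, for which you offer a plan rather than a proof, and your own diagnosis of why the plan does not close is accurate. Concretely: for $s<\pd X$ you must produce a single measure $\mu$ and a single sequence $r_n\to 0$ with $C_\mu(r_n)\lesssim r_n^{s-o(1)}$. The tools available (Joyce--Preiss, the lower density bound $\liminf_{r\to 0}\mu(B(x,r))/r^s\leq 1$ for the restriction of packing measure to a set of finite positive $\mathcal{P}^s$-measure, or Tricot's characterisation of $\pd$) give, for $\mu$-almost every $x$, scales $r_n(x)$ at which $\mu(B(x,r_n(x)))\lesssim r_n(x)^s$; but these scales depend on $x$, whereas $C_\mu(r)$ is small only if $\mu(B(x,r))$ is small for $\mu$-most $x$ simultaneously at a common scale $r$. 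Your proposed Moran-type construction would require the branching of the extracted subset to be regular along a fixed subsequence of scales, which an arbitrary set of large packing dimension need not supply, and no mechanism is given for extracting such a subset. As it stands the attempt proves one inequality of the conjectured identity and leaves the conjecture itself unresolved.
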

Note that in the above conjecture, the second equality is immediate from the definition of $\overline{F}_\mu(1)$ and so only the first inequality is unknown.

\section{Further applications} \label{applications}

What has emerged above is that product structure is bad for Fourier decay.  To demonstrate this further,  consider the question of when a product set can be a Salem set.  This is certainly  possible and trivial examples include zero dimensional sets $\{(0,0)\}$ and full dimensional sets $[0,1]^2$.  We show in the following corollary that these extreme examples are the only possibilities.  

\begin{cor} 
Let $1 \leq k<d$ be integers, and  $X$ and  $Y$ be     Borel sets  in  $\mathbb{R}^k$  and $\mathbb{R}^{d-k}$, respectively.  If  $\hd (X \times Y) \in (0,d)$, then $X \times Y$ is not a Salem set.
\end{cor}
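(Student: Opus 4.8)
The plan is to argue by contradiction: assume $X \times Y$ is a Salem set, so that $\fd(X \times Y) = \hd(X \times Y) =: s_0$ with $s_0 \in (0,d)$, and then derive a forbidden inequality purely from the product structure. The two driving principles are that product structure suppresses Fourier decay, so $\fd(X \times Y)$ is small and controlled by the marginals, whereas it cannot suppress Hausdorff dimension, so $\hd(X \times Y)$ is large; forcing these to be equal in the intermediate range $(0,d)$ will be impossible.

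First I would record the upper bound $\fd(X \times Y) \le \min\{\efd X, \efd Y\}$. This is the $\theta = 0$ case of the upper bound in Theorem \ref{productset}, but for general Borel (not necessarily compact) $X$ and $Y$ it follows directly from the identity $\widehat m(x,0) = \widehat{\mu}(x)$ used there, where $m$ is any finite Borel measure with $\textup{spt}(m)\subseteq X\times Y$ and $\mu$ is the projection of $m$ onto $X$: if $|\widehat m(z)| \lesssim |z|^{-s/2}$ then restricting to $z = (x,0)$ gives $|\widehat\mu(x)| \lesssim |x|^{-s/2}$, so $\fd m \le \fd \mu \le \efd X$; taking the supremum over $m$ yields the bound, and symmetrically for $Y$. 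Since $\fd(X\times Y)=s_0$, this gives $\efd X \ge s_0$ and $\efd Y \ge s_0$.

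Next I would feed this into the standard facts $\fd X \le \hd X$ and $\fd X = \min\{\efd X, k\}$ (and the analogues for $Y$ with $d-k$ in place of $k$), obtaining
\[
\hd X \ge \min\{\efd X, k\} \ge \min\{s_0, k\} \qquad \text{and} \qquad \hd Y \ge \min\{s_0, d-k\}.
\]
Combining these with the classical product lower bound $\hd(X \times Y) \ge \hd X + \hd Y$ (the set analogue of the left-hand inequality in \eqref{hdbe}) and using $\hd(X\times Y) = s_0$ produces the single constraint
\[
s_0 \ge \min\{s_0, k\} + \min\{s_0, d-k\}.
\]

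Finally I would close the argument by a short case analysis on the position of $s_0$ relative to $k$ and $d-k$, using $k \ge 1$, $d-k \ge 1$, and $k + (d-k) = d$. If $s_0 \le \min\{k, d-k\}$ the right-hand side equals $2s_0$, forcing $s_0 \le 0$; if $s_0$ lies strictly between $\min\{k,d-k\}$ and $\max\{k,d-k\}$ the right-hand side is $\min\{k,d-k\} + s_0 > s_0$; and if $s_0 > \max\{k,d-k\}$ the right-hand side equals $d$, forcing $s_0 \ge d$. Each alternative contradicts $s_0 \in (0,d)$, completing the proof. The only genuinely delicate point is the first step in the generality of Borel sets, since Theorem \ref{productset} is stated for compact sets; I expect the main care to go into justifying $\fd(X\times Y) \le \min\{\efd X, \efd Y\}$ via the projection identity $\widehat m(x,0)=\widehat\mu(x)$ (which needs no compactness), after which the remaining steps are elementary.
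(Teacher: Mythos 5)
Your proof is correct and follows essentially the same route as the paper: assume Salemness, bound $\fd(X\times Y)$ above by $\min\{\efd X,\efd Y\}$, combine this with $\hd(X\times Y)\ge \hd X+\hd Y$ and $\hd X\ge \fd X=\min\{\efd X,k\}$ (and its analogue for $Y$), and extract the impossible inequality $s_0\ge s_0\wedge k+s_0\wedge(d-k)$ for $s_0\in(0,d)$. Your extra care in rejustifying the upper bound $\fd(X\times Y)\le\min\{\efd X,\efd Y\}$ for general Borel sets via the projection identity $\widehat m(x,0)=\widehat\mu(x)$ is a minor but genuine improvement in rigour, since Theorem \ref{productset} is stated only for compact sets yet the paper invokes it directly.
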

\begin{proof}
Suppose  $X \times Y$ is   a Salem set.  Applying Theorem \ref{productset}, we get
\begin{align*}
d > \hd (X \times Y) 
&= \fd  (X \times Y) \\
&=\min \{  \efd X ,   \efd Y  \}\\
&=  \hd  (X \times Y)  \\
&\geq  \hd X + \hd Y \\
&\geq  \min \{  \efd X ,  k  \} +  \min \{  \efd Y ,  (d-k)  \}.
\end{align*}
Write $t = \min \{  \efd X ,   \efd Y  \}$ and by assumption $t \in (0,d)$.  By the above, we also have
\[
t \geq   t \wedge  k  +   t \wedge (d-k).
\]
Since $t>0$ this forces $t \geq k \vee (d-k)$  and we get $t \geq k+(d-k) = d$, a contradiction. 
\end{proof}

Let $\mu$ be a finite Borel measure on $\rd$ and consider the $n$-fold product measure $ \mu^n = \mu \times \cdots \times \mu$ on $\mathbb{R}^{nd}$.  Then perhaps one expects the dimension of $ \mu^n$ to increase with $n$, perhaps even with $\fs  \big( \mu^n \big) \approx n \fs \mu$.  That is, the asymptotic growth rate of the dimension of $\mu^n$ is dictated by the dimension of $\mu$.  Indeed, if $\fs \mu < d \theta$, then by Theorem \ref{product}
\[
\fs \big( \mu^n \big) \geq n \fs \mu
\]
and this is an equality provided $\overline{F}_\mu(\theta) = \fs \mu$; see  Theorem \ref{product2}.  However, we see that there is a relative loss of smoothness for measures with large dimension:~if $\fs \mu$ is  large, the asymptotic growth of $\fs \big( \mu^n \big) $ does not see this. The following is an immediate consequence of Theorem \ref{product}.
\begin{cor}
Let $\mu$ be a finite Borel measure on $\rd$ and suppose $\theta \in [0,1]$ is such that $d \theta \leq \fs \mu < \infty$.  Then
\[
\fs \big( \mu^n \big) = (n-1) d \theta + \fs \mu
\]
for all integers $ n\geq 1$.  In particular, if $\theta=0$, then $\fd \big( \mu^n \big) =   \fd \mu \lesssim 1$ and if $\theta >0$ then   $\fs \big( \mu^n \big) \sim n d \theta$ as $n \to \infty$.
\end{cor}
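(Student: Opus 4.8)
The plan is to prove the displayed identity by induction on $n$ using Theorem \ref{product}, with the base case $n=1$ reducing to the tautology $\fs(\mu^1) = (1-1)d\theta + \fs\mu = \fs\mu$. For the inductive step I would write $\mu^n = \mu^{n-1} \times \mu$, regarding $\mu^{n-1}$ as a measure on $\mathbb{R}^{(n-1)d}$ and $\mu$ as a measure on $\rd$. Thus, in the notation of Theorem \ref{product}, the ambient dimension is $nd$, the first marginal occupies the first $k=(n-1)d$ coordinates, and the complementary block has dimension $d$. Assuming the inductive hypothesis $\fs(\mu^{n-1}) = (n-2)d\theta + \fs\mu$, I would substitute it into both bounds supplied by Theorem \ref{product}.

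For the upper bound, Theorem \ref{product} gives
\[
\fs(\mu^n) \leq \min\big\{(n-1)d\theta + \fs\mu, \ \fs(\mu^{n-1}) + d\theta\big\},
\]
and after substituting the inductive hypothesis both entries equal $(n-1)d\theta + \fs\mu$, so the upper bound is immediate. For the lower bound,
\[
\fs(\mu^n) \geq \min\big\{(n-1)d\theta + \fs\mu, \ \fs(\mu^{n-1}) + d\theta, \ \fs(\mu^{n-1}) + \fs\mu\big\};
\]
the first two entries again collapse to $(n-1)d\theta + \fs\mu$, while the cross term becomes $\fs(\mu^{n-1}) + \fs\mu = (n-2)d\theta + 2\fs\mu$.

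The one point requiring the hypothesis is the comparison of this cross term with $(n-1)d\theta + \fs\mu$: the inequality $(n-2)d\theta + 2\fs\mu \geq (n-1)d\theta + \fs\mu$ is equivalent to $\fs\mu \geq d\theta$, which is exactly the standing assumption. Hence the cross term is inactive, the minimum in the lower bound also equals $(n-1)d\theta + \fs\mu$, and the induction closes. I expect this comparison to be the only real content of the argument: the assumption $d\theta \leq \fs\mu$ is precisely what forces the generically non-matching upper and lower bounds of Theorem \ref{product} to coincide at every stage, whereas without it the cross term could drag the lower bound strictly below, in line with the discussion following \eqref{hdbe}.

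Finally, the two ``in particular'' assertions follow by inspection of the closed formula. Setting $\theta=0$ yields $\fd(\mu^n) = \fd\mu$, independent of $n$ (consistent with the Fourier dimension identity $\fd(\mu\times\nu)=\min\{\fd\mu,\fd\nu\}$ from Theorem \ref{product}), while for $\theta>0$ rewriting $(n-1)d\theta + \fs\mu = nd\theta + (\fs\mu - d\theta)$ shows that the constant correction is negligible, so $\fs(\mu^n) \sim nd\theta$ as $n\to\infty$.
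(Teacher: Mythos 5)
Your induction via $\mu^n = \mu^{n-1}\times\mu$ and Theorem \ref{product} is correct, and the observation that $d\theta \leq \fs\mu$ is exactly what renders the cross term $\fs(\mu^{n-1})+\fs\mu$ inactive is the right (and only) point of substance. The paper states this corollary as an immediate consequence of Theorem \ref{product} without writing out a proof, and your argument is precisely the intended one.
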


\section{Examples: surface measures and cylinders} \label{examples}

Lebesgue measure is a curious example as we will see.  Writing $\mathcal{L}^d$ for the Lebesgue measure on $[0,1]^d \subseteq \mathbb{R}^d$, it is well-known and easy to show that
\[
|\widehat{\mathcal{L}^1} (z) | \lesssim |z|^{-1}
\]
and that the exponent 1 is best possible. Therefore $\fd \mathcal{L}^1 = 2$.  Moreover, it is easy to see that
\[
 \fs \mathcal{L}^1 = 2
\]
for all $\theta \in [0,1]$.   One might expect the Fourier dimension of $\mathcal{L}^d$  to increase with $d$, but this is not the case.  
\begin{prop} \label{lebesgue}
For all $d \geq 1$
\[
\fs  \mathcal{L}^d = 2+(d -1)\theta
\]
for all $\theta \in [0,1]$.  In particular, $\fd \mathcal{L}^d  = 2$ and $\sd \mathcal{L}^d  = d+1$  for all $d \geq 1$.
\end{prop}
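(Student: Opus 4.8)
The plan is to argue by induction on $d$, using the product formula of Theorem \ref{product} together with the one-dimensional base case $\fs \mathcal{L}^1 = 2$ supplied above. I would write $\mathcal{L}^d = \mathcal{L}^1 \times \mathcal{L}^{d-1}$, viewing $\mathcal{L}^d$ as a product measure on $\mathbb{R}^1 \times \mathbb{R}^{d-1}$, so as to apply Theorem \ref{product} with $k = 1$, $\mu = \mathcal{L}^1$ and $\nu = \mathcal{L}^{d-1}$. The base case $d=1$ is exactly the stated identity $\fs \mathcal{L}^1 = 2$, and the inductive hypothesis is $\fs \mathcal{L}^{d-1} = 2 + (d-2)\theta$.

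For the inductive step I would substitute $\fs \mathcal{L}^1 = 2$ and $\fs \mathcal{L}^{d-1} = 2+(d-2)\theta$ into the bounds of Theorem \ref{product}. The three quantities appearing in the lower bound become
\[
k\theta + \fs \nu = 2+(d-1)\theta, \qquad \fs \mu + (d-k)\theta = 2+(d-1)\theta, \qquad \fs \mu + \fs \nu = 4+(d-2)\theta,
\]
and since $\theta \le 1$ we have $4+(d-2)\theta \ge 2+(d-1)\theta$, so the minimum of the three is $2+(d-1)\theta$. The upper bound of Theorem \ref{product} is the minimum of the first two of these quantities, which is again $2+(d-1)\theta$. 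Hence the upper and lower bounds coincide and $\fs \mathcal{L}^d = 2+(d-1)\theta$, completing the induction.

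The only point that needs checking — and this is the sole obstacle, such as it is — is that we are genuinely in the regime where Theorem \ref{product} yields a sharp value rather than a gap between its bounds. This is precisely the condition $\fs \mu \ge k\theta$ flagged after Theorem \ref{product}, which here reads $\fs \mathcal{L}^1 = 2 \ge \theta$ and holds for every $\theta \in [0,1]$; equivalently, the cross term $\fs \mu + \fs \nu$ never becomes the minimizer, as witnessed by the inequality $\theta \le 2$ used above. (Alternatively, one can bypass the induction entirely: since $\mathcal{L}^d = (\mathcal{L}^1)^d$ and $\fs \mathcal{L}^1 = 2$ satisfies $1\cdot\theta \le \fs \mathcal{L}^1 < \infty$, the corollary on $n$-fold products applies with base dimension $1$ and $n = d$, giving $\fs \mathcal{L}^d = (d-1)\theta + \fs \mathcal{L}^1 = 2+(d-1)\theta$ in one step.) Finally, the two ``in particular'' statements are immediate specialisations of the formula: setting $\theta = 0$ gives $\fd \mathcal{L}^d = \dim^0_{\mathrm{F}} \mathcal{L}^d = 2$, and setting $\theta = 1$ gives $\sd \mathcal{L}^d = \dim^1_{\mathrm{F}} \mathcal{L}^d = d+1$.
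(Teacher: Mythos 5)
Your proof is correct and follows essentially the same route as the paper: induction on $d$ via $\mathcal{L}^d = \mathcal{L}^1 \times \mathcal{L}^{d-1}$ and Theorem \ref{product}, with the same three-term minimum evaluating to $2+(d-1)\theta$ because the cross term $4+(d-2)\theta$ never wins for $\theta \le 1$. The extra verification that you are in the sharp regime, and the alternative one-step derivation via the $n$-fold product corollary, are both valid but not needed beyond what the paper does.
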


\begin{proof}
The claim for $d=1$ is simple,  and mentioned above. For $d \geq 2$, assume the claim is true for $d-1$ and proceed by induction. Since $\mathcal{L}^d = \mathcal{L}^1 \times \mathcal{L}^{d-1}$ (as a Borel measure), applying Theorem \ref{product}
\begin{align*}
\fs  \mathcal{L}^d &= \min \Big\{ \theta +2+(d -2)\theta, \ 2+(d-1)\theta ,  \ 2+ 2+(d -2)\theta\Big\}  = 2+(d -1)\theta
\end{align*}
as required.
\end{proof}

Despite the somewhat counter-intuitive nature of the previous result, it is true that
\begin{equation} \label{fscube}
\fs [0,1]^d = d
\end{equation}
for all $d\geq 1$ and all $\theta \in [0,1]$.  The (perhaps) surprising thing is that this is not witnessed by Lebesgue measure, but rather by, for example, $fdz$ for a Schwartz function $f$ with support contained in $(0,1)^d$.

Another simple but important  example is the sphere $S^k \subseteq \mathbb{R}^{k+1}$ which is well-known to be a Salem set.  Moreover, this is witnessed by the surface measure, see \cite{mattila}.  Indeed, writing $\sigma^k$ for the surface measure on $S^k$,
\[
\fs S^k = \fs \sigma^k = k
\]
for all $\theta \in [0,1]$.

Let $ k,n \geq 1$ be integers and consider the cylinder $C_{k,n} = S^{k} \times [0,1]^{n} \subseteq \mathbb{R}^{k+1+n}$.  Note that $C_{1,1}$ is the familiar cylinder $S_1 \times [0,1]$ in $\mathbb{R}^3$.  In general  $C_{k,n}$ is a smooth $(k+n)$-dimensional manifold (with a boundary) and the surface measure is $\sigma^k \times \mathcal{L}^n$ where $\sigma^k$ is the surface measure on $S^k$ and  $\mathcal{L}^n$  is the Lebesgue measure on $[0,1]^{n}$.  
\begin{cor} \label{cylinder}
For all  integers $ k,n \geq 1$,
\[
\fs  (\sigma^k \times \mathcal{L}^n)  =   \min\Big\{ 2+(k+n)\theta ,  \ k  + n\theta \Big\}
\]
for all $\theta \in [0,1]$.  This has a phase transition at $\theta = 1-2/k$ whenever $k\geq 3$.
\end{cor}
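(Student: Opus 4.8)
The plan is to read the result straight off Theorem~\ref{product}, since $\sigma^k \times \mathcal{L}^n$ is already presented as a product. First I would assemble the two ingredients. The sphere $S^k$ is Salem and this is witnessed by its surface measure, so $\fs \sigma^k = k$ for every $\theta \in [0,1]$; and Proposition~\ref{lebesgue} gives $\fs \mathcal{L}^n = 2 + (n-1)\theta$. The one point that needs care is the bookkeeping of ambient dimensions: $\sigma^k$ is supported in $\mathbb{R}^{k+1}$ and $\mathcal{L}^n$ in $\mathbb{R}^n$, so in the notation of Theorem~\ref{product} (marginals on $\mathbb{R}^k$ and $\mathbb{R}^{d-k}$) the role of the first ambient dimension is played by $k+1$ and that of the second by $n$, with total dimension $d = k+n+1$.

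Substituting $\fs \sigma^k = k$ and $\fs \mathcal{L}^n = 2+(n-1)\theta$ into Theorem~\ref{product}, the upper bound becomes
\[
\fs(\sigma^k \times \mathcal{L}^n) \le \min\{2 + (k+n)\theta,\ k + n\theta\},
\]
while the lower bound is the same minimum with one extra competitor, namely $k + 2 + (n-1)\theta$. To close the gap I would observe that this extra term is redundant: since
\[
k + 2 + (n-1)\theta = (k + n\theta) + (2 - \theta)
\]
and $2 - \theta > 0$ for all $\theta \in [0,1]$, the extra term strictly exceeds $k + n\theta$ and hence never attains the minimum. Thus the lower and upper bounds coincide and the claimed formula follows at once.

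For the phase transition I would compare the two surviving branches $A(\theta) = 2 + (k+n)\theta$ and $B(\theta) = k + n\theta$. Solving $A(\theta) = B(\theta)$ reduces to $2 + k\theta = k$, i.e. $\theta = 1-2/k$. This crossover lies in the open interval $(0,1)$ precisely when $k \ge 3$; for $k = 1$ it falls outside $[0,1]$ (so $B$ dominates throughout), and for $k = 2$ it sits at the endpoint $\theta = 0$. Since the slopes $A' = k+n$ and $B' = n$ differ (as $k \ge 1$), the graph of the minimum has a genuine corner at $\theta = 1-2/k$, which is the asserted non-differentiability.

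There is no substantive obstacle here: the statement is a corollary in the literal sense, and the only things to verify are the correct assignment of ambient dimensions and the redundancy of the third lower-bound term. The mild subtlety worth flagging is that one must not carelessly identify the sphere's intrinsic dimension $k$ with the ambient dimension appearing in Theorem~\ref{product}; getting this shift right is exactly what makes the two $\min$-expressions align.
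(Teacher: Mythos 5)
Your proposal is correct and follows essentially the same route as the paper: substitute $\fs\sigma^k=k$ and $\fs\mathcal{L}^n=2+(n-1)\theta$ into the two bounds of Theorem~\ref{product} with ambient dimensions $k+1$ and $n$, and note that the extra term $k+2+(n-1)\theta$ in the lower bound never attains the minimum. Your explicit verification of the phase transition location is a small addition the paper leaves implicit, but the argument is the same.
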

\begin{proof}
  The surface measure $\sigma^k$ on $S^{k} \subseteq \mathbb{R}^{k+1}$ is a Salem measure with dimension  $k$ and so $\fs S^{k}= k$ for all $\theta$. Proposition \ref{lebesgue} gives that $\fs \mathcal{L}^n = 2+(n-1)\theta$.   It now follows from Theorem \ref{product} that
\begin{align*}
\fs  (\sigma^k \times \mathcal{L}^n)  &\geq \min\Big\{ (k+1) \theta + \fs \mathcal{L}^n,  \ \fs \sigma^k  + n\theta ,  \  \fs \sigma^k + \fs \mathcal{L}^n\Big\}\\
  &\geq \min\Big\{ (k+1) \theta + 2+(n-1)\theta ,  \ k  + n\theta ,  \  k + 2+(n -1)\theta\Big\}\\
& =   \min\Big\{ 2+(k+n)\theta ,  \ k  + n\theta \Big\}
\end{align*}
and
\begin{align*}
\fs  (\sigma^k \times \mathcal{L}^n)   &\leq  \min\Big\{ (k+1) \theta + \fs \mathcal{L}^n,  \ \fs \sigma^k  + n\theta \Big\}\\ 
&\leq \min\Big\{ (k+1) \theta + 2+(n-1)\theta ,  \ k  + n\theta  \Big\}\\
& =   \min\Big\{ 2+(k+n)\theta ,  \ k  + n\theta \Big\}
\end{align*}
as required.
\end{proof}
%
The situation for the cylinder \emph{sets} $C_{k,n}$ is simpler.
\begin{cor}
For all  integers $ k,n \geq 1$,
\[
\fs C_{k,n} =  \fs S^{k}+ n\theta = k+n\theta.
\]
\end{cor}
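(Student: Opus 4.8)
The goal is to prove that for cylinder \emph{sets} $C_{k,n} = S^k \times [0,1]^n \subseteq \mathbb{R}^{k+1+n}$, the Fourier spectrum equals $\fs S^k + n\theta = k + n\theta$. The plan is to invoke Theorem \ref{productset} with the ambient dimensions $k' = k+1$ (the ambient dimension of $S^k$) and the remaining $n$ coordinates for $[0,1]^n$, so that the total ambient dimension is $d = k+1+n$. The key inputs will be the extended Fourier spectra of the two factors.

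First I would identify the relevant extended Fourier spectra. For the sphere, $S^k$ has zero Lebesgue measure in $\mathbb{R}^{k+1}$, so $\efs S^k = \fs S^k = k$ for all $\theta$, as already recorded in the excerpt (the surface measure $\sigma^k$ witnesses that $S^k$ is Salem). For the cube, $[0,1]^n$ has non-empty interior in $\mathbb{R}^n$, so by the discussion preceding Theorem \ref{productset} we have $\efs [0,1]^n = \infty$ for all $\theta \in [0,1]$; indeed the cube supports Schwartz measures with arbitrarily fast Fourier decay. This is the crucial observation that makes the set statement cleaner than the measure statement in Corollary \ref{cylinder}: the cube factor contributes an \emph{infinite} extended spectrum rather than the finite value $\fs \mathcal{L}^n = 2 + (n-1)\theta$ attained by Lebesgue measure.

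Then I would simply substitute into the two bounds of Theorem \ref{productset}. With $\efs S^k = k$ and $\efs [0,1]^n = \infty$, the upper bound reads
\[
\fs C_{k,n} \leq \min\Big\{ (k+1)\theta + \efs[0,1]^n, \ \efs S^k + n\theta, \ d \Big\} = \min\Big\{ \infty, \ k + n\theta, \ k+1+n \Big\} = k + n\theta,
\]
using that $k + n\theta \leq k + n \leq k+1+n = d$. The lower bound similarly gives $\fs C_{k,n} \geq \min\{ (k+1)\theta + \infty, \ k + n\theta, \ k + \infty, \ d\} = k + n\theta$, since again $k+n\theta \leq d$. The two bounds coincide, giving the result.

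The main (and essentially only) obstacle is confirming that $\efs[0,1]^n = \infty$ so that the terms $k\theta$-type bound involving the cube factor drops out of both minima; this is exactly where the \emph{set} version improves upon the \emph{measure} version and collapses the phase transition seen in Corollary \ref{cylinder}. Everything else is direct substitution. One should also double-check that $k + n\theta$ never exceeds the ambient-dimension cap $d = k+1+n$, which is immediate since $n\theta \leq n < n+1$.
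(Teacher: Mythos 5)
Your proposal is correct and follows essentially the same route as the paper: both apply Theorem \ref{productset} with $\efs S^k = k$ (since $S^k$ is Salem and Lebesgue-null in $\mathbb{R}^{k+1}$) and $\efs [0,1]^n = \infty$ (non-empty interior), whereupon the upper and lower bounds both collapse to $k+n\theta$. The only cosmetic difference is that you explicitly verify $k+n\theta \leq k+1+n$, which the paper leaves implicit in evaluating the minima.
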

\begin{proof}
By Theorem \ref{productset}, 
\begin{align*}
&\, \hspace{-2mm} \fs C_{k,n} \\
 &\geq \min\Big\{ \efs S^k+n\theta, \ (k+1)\theta + \efs [0,1]^n, \ \efs S^k+  \efs [0,1]^n , \ k+1+n\Big\}\\
&=  \min\Big\{ k+n\theta, \ (k+1)\theta + \infty, \ k+\infty, \  k+1+n\Big\}\\
&=k+n\theta
\end{align*}
and
\begin{align*}
\fs C_{k,n}  &\leq \min\Big\{ \efs S^k+n\theta, \ (k+1)\theta + \efs [0,1]^n, \ k+1+n\Big\}\\
&=  \min\Big\{ k+n\theta, \ (k+1)\theta + \infty, \ k+1+n\Big\}\\
&=k+n\theta,
\end{align*}
as required. 
\end{proof}

\begin{figure}[H]
							\includegraphics[width=\textwidth]{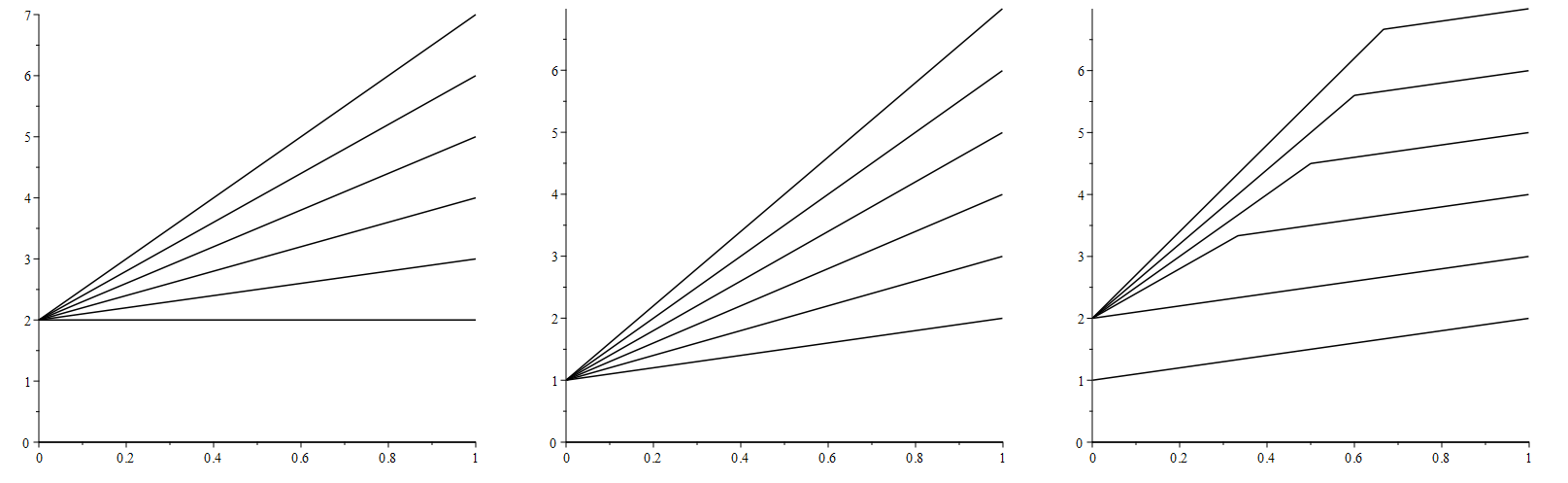} \\ \vspace{3mm}

			\caption{\emph{Three examples.}  Left: the Fourier  spectrum of $d$-dimensional Lebesgue measure on the unit cube for $d=1,2,\dots, 6$; see Proposition \ref{lebesgue}.  Centre: the Fourier spectrum of the surface measure on the cylinder $S^1 \times [0,1]^n$ for $n=1, \dots, 6$; see Corollary \ref{cylinder}.  Right: the Fourier spectrum of the surface measure on the cylinder $S^k \times [0,1]$ for $k=1, \dots, 6$; see Corollary \ref{cylinder}. These three families of measures are not distinguished by Sobolev dimension and, within each family, are not distinguished by Fourier dimension for $k \geq 2$.  However, the Fourier spectrum distinguishes all 17 distinct  measures considered apart from in one case:   $[0,1]^2$ and $S^2 \times [0,1]$.  Note here that $S^1 \times [0,1]$ and $[0,1] \times S^1$ are the same measure upon relabelling of coordinates.
}
			\end{figure}

\section{Kakeya and Furstenberg type sets} \label{kakeyasection}

A  \textit{Kakeya set} in $\mathbb{R}^d$ is a set which contains a unit line segment in every possible direction.  The famous \emph{Kakeya conjecture} is that such sets must have Hausdorff dimension $d$. This is despite the fact that they can have zero $d$-dimensional Lebesgue measure for $d \geq 2$; a result of Besicovitch \cite{besicovitch}.  Davies proved that Kakeya sets in $\mathbb{R}^2$ must have Hausdorff dimension 2 \cite{davies}, thus resolving the conjecture in the plane.  The problem   is open for $d \geq 3$ but various partial results are known; see \cite{katz, katztao} and  also \cite[Figure 5]{hickman} for a summary of the state of the art as of 2019.

In \cite{oberlin} Oberlin proved that compact Kakeya sets in $\mathbb{R}^2$ have Fourier dimension 2.  This beautiful result implies (and is rather stronger than) Davies' result mentioned above.  In \cite{harris} we  generalised Oberlin's argument to prove that compact Kakeya sets in $\mathbb{R}^d$ have Fourier dimension at least 2. While the Kakeya conjecture (for Hausdorff dimension) remains open, we observe that the Fourier dimension analogue cannot be extended any further than Oberlin's result.  That is, there are Kakeya sets in $\rd$ with Fourier dimension 2 for all $d \geq 3$.  This answers a question we raised in \cite[Question 7.2]{harris} in the negative.

\begin{prop} \label{kak}
For all integers $d \geq 3$, there exists a compact Kakeya set $K$ in $\rd$ such that
\[
\fd K = 2.
\]
\end{prop}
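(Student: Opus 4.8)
The plan is to build $K$ as a product of a planar Kakeya set with a cube, and then read off its Fourier dimension directly from Theorem \ref{productset}. Concretely, let $E \subseteq \mathbb{R}^2$ be a compact Besicovitch set, that is, a compact Kakeya set of zero Lebesgue measure (such sets exist by \cite{besicovitch}), and set
\[
K = E \times [0,1]^{d-2} \subseteq \rd,
\]
which makes sense precisely because $d \geq 3$. Since $E$ has zero area, $K$ has zero $d$-dimensional Lebesgue measure, and in particular empty interior.

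First I would verify that $K$ is a Kakeya set. Given a direction $(v,w) \in S^{d-1}$ with $v \in \mathbb{R}^2$ and $w \in \mathbb{R}^{d-2}$, the unit segment in direction $(v,w)$ emanating from $(p,q)$ is contained in $K$ as soon as $\{p+tv : t \in [0,1]\} \subseteq E$ and $\{q + tw : t \in [0,1]\} \subseteq [0,1]^{d-2}$. For the first condition, since $|v| \leq 1$ and (when $v \neq 0$) $E$ contains a unit segment in the planar direction $v/|v|$, it contains the shorter subsegment of length $|v|$ with a suitable basepoint $p$; when $v = 0$ any single point $p \in E$ suffices. For the second condition, the segment $\{q + tw : t \in [0,1]\}$ has length $|w| \leq 1$, so its projection to each coordinate axis has length at most one and it can be translated to lie inside $[0,1]^{d-2}$. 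Choosing $p$ and $q$ to satisfy the two conditions simultaneously exhibits the required unit segment, so $K$ contains a unit segment in every direction of $\rd$.

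It then remains to compute $\fd K$. By Oberlin's theorem \cite{oberlin}, the compact planar Kakeya set $E$ satisfies $\fd E = 2$, and since $E$ has zero Lebesgue measure we have $\efd E = \fd E = 2$. On the other hand $[0,1]^{d-2}$ has non-empty interior in $\mathbb{R}^{d-2}$, so it carries measures with arbitrarily fast Fourier decay and hence $\efd [0,1]^{d-2} = \infty$. Applying the product formula for sets in Theorem \ref{productset} with $k=2$,
\[
\fd K = \min\big\{\efd E, \ \efd [0,1]^{d-2}, \ d\big\} = \min\{2, \ \infty, \ d\} = 2,
\]
using $d \geq 3$. This completes the construction.

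I do not expect a genuinely hard analytic step: the entire point is that Theorem \ref{productset} converts the (already known) Fourier dimension of the planar factor directly into an upper bound for the product, something the general lower bound $\fd K \geq 2$ from \cite{harris} cannot detect. The only item requiring real care is the geometric verification that the product is genuinely a Kakeya set, in particular handling directions with both a nonzero planar component $v$ and a nonzero cube component $w$ at once; this is the step I would write out most carefully, while the Fourier dimension computation is immediate once the factors' extended Fourier dimensions are identified.
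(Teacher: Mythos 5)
Your proposal is correct and follows exactly the paper's argument: take a compact planar Besicovitch set of zero measure, form its product with $[0,1]^{d-2}$, and apply Theorem \ref{productset} together with $\efd E = \fd E = 2$ (zero measure plus Oberlin) and $\efd [0,1]^{d-2} = \infty$. The only difference is that you spell out the verification that the product is a Kakeya set, which the paper states without proof; your verification is correct.
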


\begin{proof}
 Let $K_0 \subseteq \mathbb{R}^2$ be a compact Kakeya set with zero 2-dimensional Lebesgue measure.  The fact that  such Kakeya sets exist goes back to Besicovitch \cite{besicovitch}; see also \cite[Theorem 11.1]{mattila}.  Then $K = K_0 \times [0,1]^{d-2}$ is a compact Kakeya set in $\rd$ and Theorem \ref{productset} gives that
\[
\fd K = \min \{  \efd K_0 ,   \efd ([0,1]^{d-2}) , d  \} = \min\{2, \infty, d\} = 2
\]
where we crucially use the fact that $K_0$ has 2-dimensional Lebesgue measure zero and so $\efd K_0 = \fd K_0 = 2$.
\end{proof}

Given $s,t >0$, an $(s,t)$-\emph{Furstenberg set} $U$ in $\mathbb{R}^d$ is a set for which there is a set of lines $\mathcal{L}$ of Hausdorff dimension at least $t$ such that for every line $L \in \mathcal{L}$ , $\hd U \cap L \geq s$.  Note that the space of lines in $\rd$ has dimension $2(d-1)$ and so we only consider $0 < t \leq 2(d-1)$ and $0 < s \leq 1$. In particular, Kakeya sets are $(1,d-1)$-Furstenberg sets.  There is a lot of interest in proving lower bounds for the Hausdorff dimension of $(s,t)$-Furstenberg sets in terms of $s$ and $t$; see significant recent papers  \cite{GSW19,OS23,OS23+,wang} and the references therein. The planar case was recently resolved by Ren and Wang \cite{wang} where it was proved that an $(s,t)$-Furstenberg set $U$ in $\mathbb{R}^2$ has Hausdorff dimension at least
\[
\min\left\{s+t, \frac{3s+t}{2}, s+1\right\}.
\]
In particular, this  bound is the best possible.  Recalling Oberlin's work on Kakeya sets it is natural to ask about the Fourier dimension of   $(s,t)$-Furstenberg sets  but we observe that nothing can be said in general.  That is, for all $d \geq 2$, there exist $(1,2(d-1))$-Furstenberg sets with Fourier dimension 0.  In fact we prove the following stronger statement.

\begin{prop} \label{furst}
For all integers $d \geq 2$, there exists a compact set $U \subseteq \rd$ such that there is a   set of lines $\mathcal{L}$ with  non-empty interior in the $2(d-1)$-dimensional manifold of all lines such that $U$ intersects every line in $\mathcal{L}$  in a set of positive length, but yet
\[
\fd U = 0.
\]
\end{prop}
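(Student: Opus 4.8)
The plan is to realise $U$ as a product and then read off its Fourier dimension from Theorem~\ref{productset}. Concretely, I would fix a compact set $A \subseteq [0,1] \subseteq \mathbb{R}$ with $\mathcal L(A) > 0$ but $\fd A = 0$, and set $U = A \times [0,1]^{d-1} \subseteq \rd$, which is compact. Since $[0,1]^{d-1}$ has non-empty interior in $\mathbb{R}^{d-1}$, we have $\efd [0,1]^{d-1} = \infty$, so Theorem~\ref{productset} with $k=1$ gives
\[
\fd U = \min\{\efd A,\ \efd [0,1]^{d-1},\ d\} = \min\{\efd A,\ d\} = 0,
\]
where the last equality uses $\efd A = 0$, which is equivalent to $\fd A = 0$ because $\fd A = \min\{\efd A, 1\}$ forces $\efd A = 0$ whenever it vanishes. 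This disposes of the Fourier decay requirement.

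Next I would check the geometric condition, and here it is essential that $A$ has positive length. Among lines whose direction has non-zero first coordinate (an open, dense, full-dimensional family in the $2(d-1)$-dimensional manifold of lines), I parametrise a line by slopes and intercepts $(\mathbf m,\mathbf b) = (m_2,\dots,m_d,b_2,\dots,b_d) \in \mathbb{R}^{2(d-1)}$ as $t \mapsto (t, b_2+m_2 t,\dots,b_d+m_d t)$. Such a line meets $U$ in $\{t \in A : b_j + m_j t \in [0,1] \text{ for all } j\}$, whose length equals $\sqrt{1+|\mathbf m|^2}\,\mathcal L\big(A \cap I(\mathbf m,\mathbf b)\big)$, where $I(\mathbf m,\mathbf b)$ is the interval of admissible parameters $t$. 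Picking an interval $I_0 \subseteq [0,1]$ on which $A$ has positive measure, and then choosing $\mathbf m$ small and $\mathbf b$ central enough that $I(\mathbf m,\mathbf b) \supseteq I_0$—an open condition on $(\mathbf m,\mathbf b)$—yields an open set $\mathcal L$ of lines, each meeting $U$ in positive length. Note that a measure-zero $A$ would fail here: the only lines meeting $A \times [0,1]^{d-1}$ in positive length would be those with zero first-coordinate direction lying over a single point of $A$, and these never form an open family; so the positivity of $\mathcal L(A)$ is genuinely needed.

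The real content, and the main obstacle, is the existence of the set $A$: a compact, nowhere dense set of positive Lebesgue measure whose Fourier dimension is nonetheless $0$. Equivalently, no finite Borel measure supported on $A$ enjoys any polynomial Fourier decay, even though $A$ is large in the measure-theoretic sense. This is the crux, because the product formula of Theorem~\ref{productset} transfers $\fd A = 0$ to $\fd U = 0$ only if $A$ has Fourier dimension exactly zero as witnessed by \emph{every} supported measure. I would obtain $A$ either by invoking the (known) fact that Fourier dimension is not controlled by Lebesgue measure—so that positive-measure sets of Fourier dimension zero exist—or by an explicit fat Cantor-type construction in which the gap structure at every scale is arranged to preclude polynomial decay of $\widehat\mu$ along a lacunary frequency sequence, uniformly over all $\mu$ supported on $A$. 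Establishing this uniform-over-$\mu$ non-decay is the delicate step; granting it, the proposition follows for every $d \ge 2$ from the two routine verifications above.
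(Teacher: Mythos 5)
Your argument is correct and essentially identical to the paper's: the paper likewise takes $U = U_0 \times [0,1]^{d-1}$ for a compact positive-measure set $U_0 \subseteq [0,1]$ with $\fd U_0 = 0$, takes for $\mathcal{L}$ an open family of lines nearly parallel to the first coordinate axis (each meeting $U$ in an affine copy of $U_0$), and concludes via Theorem \ref{productset} using $\efd U_0 = \fd U_0 = 0$. The one ingredient you leave as a ``known fact'' to be invoked---the existence of such a set $U_0$---is exactly what the paper supplies by citing Ekstr\"om, Persson and Schmeling \cite[Example 7]{modfourier}, so no new construction is required and your proof is complete once that reference is inserted.
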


\begin{proof}
Let $U_0 \subseteq [0,1]$ be a compact set with positive 1-dimensional Lebesgue measure but $\fd U_0 = 0$.  Such sets were shown to exist by Ekstr\"om, Persson and  Schmeling; see \cite[Example 7]{modfourier}.  Let   $U = U_0 \times [0,1]^{d-1}$ observing that $U$ is a compact subset of $ [0,1]^d$.

 Let $\pi$ denote projection from $\rd$ onto the line $\mathbb{R} \times \{0_{d-1}\}$ where $0_{d-1}$ is the origin in $\mathbb{R}^{d-1}$  and let $\mathcal{L}$ be the set of lines $L$ in $\rd$  with the property that 
\[
\pi(L \cap [0,1]^d) = \pi(  [0,1]^d)  = [0,1]\times \{0_{d-1}\}.
\]
Then, for every $L \in \mathcal{L}$,   $L \cap U$ contains an affine  image of $U_0$ and thus has positive length. Moreover, the collection of lines $\mathcal{L}$ clearly has non-empty interior in the space of all lines; indeed, it contains a neighbourhood of the line
\[
\mathbb{R} \times \{0_{d-1}\} + (0, 1/2, 1/2, \cdots, 1/2).
\]
By Theorem \ref{productset}
\[
\fd U = \min \{  \efd U_0 ,   \efd ([0,1]^{d-1}) , d  \} = \min\{0, \infty, d\} = 0
\]
where we crucially use the fact that $U_0$ has $\fd U_0 = 0$ and so $\efd U_0 = \fd U_0 = 0$.
\end{proof}

Apart from putting an end to Fourier analytic analogues of the Kakeya conjecture or questions about Furstenberg sets, we view Propositions \ref{kak} and \ref{furst} as highlighting the elegance of Oberlin's result   that Kakeya sets in $\mathbb{R}^2$ have Fourier dimension 2.  In particular, this is sharp in the sense that it cannot be pushed further in either of the above considered natural directions.

\section*{Acknowledgements}

The author thanks Kenneth Falconer for helpful discussions.

\end{document}